\newif\iftodonotes
  \providecommand\@dotsep{5}
  \renewcommand{\listoftodos}[1][\@todonotes@todolistname]{%
    \@starttoc{tdo}{#1}}
\DeclareMathOperator{\rk}{rk}
\DeclareMathOperator{\Aut}{Aut}
\DeclareMathOperator{\PSL}{PSL}
\DeclareMathOperator{\GL}{GL}
\DeclareMathOperator{\PGL}{PGL}
\DeclareMathOperator{\SL}{SL}
\DeclareMathOperator{\Hom}{Hom}
\DeclareMathOperator{\red}{red}
\DeclareMathOperator{\gon}{gon}
\newtheorem{theorem}{Theorem}[section]
\newtheorem*{theorem*}{Theorem}
\newtheorem{lemma}[theorem]{Lemma}
\newtheorem{proposition}[theorem]{Proposition}
\newtheorem*{proposition*}{Proposition}
\newtheorem{corollary}[theorem]{Corollary}
\newtheorem*{corollary*}{Corollary}
\theoremstyle{definition}
\newtheorem{remark}[theorem]{Remark}
\newtheorem{definition}[theorem]{Definition}
\numberwithin{equation}{section}
\newcommand{\Q}{\mathbb{Q}}
\newcommand{\Z}{\mathbb{Z}}
\newcommand{\R}{\mathbb{R}}
\newcommand{\C}{\mathbb{C}}
\newcommand{\F}{\mathbb{F}}
\newcommand{\PP}{\mathbb{P}}
\newcommand{\Qbar}{\overline \Q}
\newcommand{\tor}{\mathrm{tors}}
\def\torz#1{\mathbb Z /#1 \mathbb Z}
\def\tg#1#2{\mathbb Z/#1\mathbb Z \times \mathbb Z /#2 \mathbb Z}
\newcommand{\diamondop}[1]{\langle #1 \rangle} 
\newcommand{\Norm}[2]{\textup{Norm}_{ #1 }( #2 ) }
\newcommand{\sixpartB}[1]{E_0( #1 )}
\newcommand{\lmfdbec}[3]{\href{https://www.lmfdb.org/EllipticCurve/Q/#1/#2/#3}{#1.#2#3}}
\newcommand{\githubbare}[1]{\href{https://github.com/nt-lib/X0N-sporadic/blob/main/#1}{\path{#1}}}
\date{\today}
\title{Sporadic points on $X_0(N)$}
\author[Derickx]{Maarten Derickx}
\address{Maarten Derickx, University of Zagreb, Faculty of Science, Department of Mathematics, Bijeni\v{c}ka Cesta 30, 10000 Zagreb, Croatia}
\email{\url{maarten@mderickx.nl}}
\urladdr{\url{https://www.maartenderickx.nl/}}
\author[Najman]{Filip Najman}
\address{Filip Najman, University of Zagreb, Faculty of Science, Department of Mathematics, Bijeni\v{c}ka Cesta 30, 10000 Zagreb, Croatia}
\email{\url{fnajman@math.hr}}
\urladdr{\url{https://web.math.pmf.unizg.hr/~fnajman/}}
\thanks{The authors were supported by the  project “Implementation of cutting-edge research and its application as part of the Scientific Center of Excellence for Quantum and Complex Systems, and Representations of Lie Algebras“, PK.1.1.10.0004, European Union, European Regional Development Fund and by the Croatian Science Foundation under the project no. IP-2022-10-5008.}
\subjclass[2020]{11G05, 14G05, 11G18}
\date{June 2025}
\begin{document}
\begin{abstract}
    We determine all integers $N$ for which the modular curve $X_0(N)$ admits a sporadic CM point (of any degree), as well as all $N$ for which $X_0(N)$ admits a sporadic point, whether CM or non-CM. In a sense, our results generalize the classification of isogenies of elliptic curves over $\Q$ due to Mazur and Kenku: their work determines the $X_0(N)$ with degree 1 sporadic points, whereas we classify all $X_0(N)$ that have a sporadic point of arbitrary degree. 


\end{abstract}

\maketitle

\section{Introduction}

Following \cite{VirayVogt24}, for a nice\footnote{smooth, projective, geometrically integral} curve $X$ over a field $k$, we define the \textit{minimum density degree} (over $k$) $ \delta(X/k)$  \footnote{We follow the notation of \cite{CGPS22} here; note that this is denoted $\min\delta (X/k)$ in \cite{VirayVogt24}.} to be the smallest integer $n$ such that $X$ has infinitely many points of degree $n$ over $k$. We only consider curves over $\Q$ in this paper, so we write simply $\delta(X):=\delta(X/\Q)$. A \textit{sporadic point} $x \in X(\Qbar)$ is a point such that $\deg x< \delta(X)$. To avoid repetition, when speaking of sporadic points on modular curves, we will always mean (unless stated otherwise) \textit{non-cuspidal} sporadic points. 

Isolated and sporadic points on modular curves have been a subject of considerable recent interest. For a definition of isolated points, see \cite{BELOV}; the set of sporadic points is a subset of the isolated points on a curve. In general, one does not expect a curve to have any sporadic points at all. In \cite{najman16} the second author found the first example of a sporadic point on $X_1(N)$ (and the one of lowest possible degree, 3), after which many have been found by van Hoeij \cite{vanHoeij}. In \cite{BELOV} Bourdon, Ejder, Liu, Odumodu and Viray  proved that, assuming Serre's uniformity conjecture, there are only finitely many $j$-invariants $j(x)\in \Q$, with $x$ a sporadic point on $X_1(N)$; this is proved unconditionally for $N$ a prime power by Bourdon and Ejder \cite{BourdonEjder25, Ejder}. 

Mazur proved in \cite{mazur77} that there are no sporadic points of degree $1$ on $X_1(N)$; the same was proved for degree $2$ by Kamienny, Kenku and Momose \cite{KM88, kamienny92}. In \cite{Deg3Class} Derickx, Etropolski, van Hoeij, Morrow and Zureick-Brown proved that the sporadic points on $X_1(21)$ found in \cite{najman16} are the only degree $3$ sporadic points on $X_1(N)$. The authors of this paper proved in \cite{DNquartic24} that there are no degree $4$ sporadic points on $X_1(N)$ and ask whether there exist sporadic points on $X_1(N)$ of every degree $d>4.$ van Hoeij's examples from \cite{vanHoeij} give sporadic points of degree $5\leq d\leq 13$ (see \cite[Appendix A]{DNquartic24}). Clark, Genao, Pollack and Saia showed in \cite{CGPS22} that all $X_1(N)$ for $N\geq 721$ have sporadic points. For further results on isolated and sporadic points on $X_1(N)$ and $X_0(N)$, see \cite{OddDeg, BHKKLMNS25,  BourdonNajman21, terao24}.

The aim of this paper is to study sporadic points on the modular curves $X_0(N)$. In \cite[Appendix~A]{DNquartic24}, the authors showed that there exist sporadic points on $X_0(N)$ (for varying~$N$) of all degrees $d \le 2166$, and conjectured that sporadic points exist on $X_0(N)$ for every degree~$d$. In this work, we investigate for which values of~$N$ the curve $X_0(N)$ admits a sporadic point (of any degree).

Elliptic curves with complex multiplication (CM) are a common source of sporadic points on $X_0(N)$. In \cite[Theorem 8.2.]{CGPS22} the authors find $50$ values of $N$ with no sporadic CM points on $X_0(N)$, 106 values for which they could not determine whether there are CM sporadic points, and prove that there are sporadic CM points for all other $N$. The first of the main results of this paper determines for the remaining 106 values of $N$ whether $X_0(N)$ has a sporadic CM point. 

\begin{theorem}\label{thm:main_CM}
    The modular curve $X_0(N)$ has a sporadic CM point if and only if 
    \begin{align*}
        N\notin S_{CM}:=\{&1-13, 15-18, 20-26,30-33, 35-37,39-41, 46-50,\\
        &53,59 -61, 65,70-72,79,80,83,87,89,94,96, 101,131,144\}
    \end{align*}
\end{theorem}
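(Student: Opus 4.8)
The plan is to build on the partial classification in \cite[Theorem 8.2]{CGPS22}, where $50$ values of $N$ are shown to have no sporadic CM point, sporadic CM points are exhibited for all but $106$ remaining values, and those $106$ values are left undetermined. So the task splits into two halves: for each of the $106$ open $N$, either (a) exhibit an explicit CM point on $X_0(N)$ whose degree is strictly below $\delta(X_0(N))$, or (b) prove that no such point exists, thereby adding $N$ to $S_{CM}$. The set $S_{CM}$ in the statement should therefore consist of the $50$ values from \cite{CGPS22} together with whichever of the $106$ fall into case (b).

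For the lower-bound side (showing a point is sporadic), I would first pin down $\delta(X_0(N))$ for each relevant $N$, or at least a good lower bound for it. When $X_0(N)$ has gonality $\gon(X_0(N))$ known (from the work of Abramovich, and the explicit low-genus/gonality tables), one has $\delta(X_0(N)) \le \gon(X_0(N))$, and conversely $\delta(X_0(N)) \ge \lceil \gon(X_0(N))/2 \rceil$ or better; more precisely I would use the Frey–Kani style bounds together with the classification of curves of low degree and the fact that $X_0(N)(\Q)$, $X_0(N)^{(2)}(\Q)$ etc. are understood in the relevant range. The key arithmetic input is the theory of CM points on $X_0(N)$: for a CM order $\calO$ of discriminant $\Delta$ in an imaginary quadratic field $K$, the degrees of CM-by-$\calO$ points on $X_0(N)$ are computed from the class number $h(\Delta)$, the ring class field, and the factorization behavior of $N$ in $\calO$ (this is exactly the machinery systematized in \cite{CGPS22} and \cite{BourdonClark}). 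So for each open $N$ I would search over small $|\Delta|$ (only finitely many give points of small enough degree) and compute the minimal CM point degree $d_{CM}(N)$; if $d_{CM}(N) < \delta(X_0(N))$ we are in case (a).

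For the upper-bound side (showing $N \in S_{CM}$, i.e.\ no CM point is sporadic), the argument is: the minimal CM point degree $d_{CM}(N)$ is at least $\delta(X_0(N))$, so \emph{every} CM point already lies in a degree where there are infinitely many points. Concretely, one computes $d_{CM}(N)$ exactly from the CM-degree formulas, and separately produces a lower bound $\delta(X_0(N)) \le d_{CM}(N)$ is \emph{not} what we want — rather we need $\delta(X_0(N)) \le d_{CM}(N)$ to fail to be strict, i.e.\ $\delta(X_0(N)) \le d_{CM}(N)$. Wait: to conclude no CM point is sporadic we need $d_{CM}(N) \ge \delta(X_0(N))$, and since $\delta$ is bounded above by the gonality and by degrees where the symmetric power has a positive-dimensional component (e.g.\ if $X_0(N)$ admits a degree-$d$ map to $\PP^1$ or to an elliptic curve of positive rank), the clean way is: exhibit a source of infinitely many degree-$d_0$ points with $d_0 \le d_{CM}(N)$. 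This is typically done via a map $X_0(N) \to E$ to a rank-$\ge 1$ elliptic quotient, or via $\Q$-rational $\gon$ maps. So the main obstacle is the interplay of the two bounds: for the finitely many $N$ where $d_{CM}(N)$ and $\delta(X_0(N))$ are close, I expect I will need sharp, non-generic information — an exact determination of $\delta(X_0(N))$ (not just the gonality), which may require knowing the Mordell–Weil ranks of the relevant quotient abelian varieties and ruling out unexpected low-degree maps. I expect these borderline cases (likely the larger $N$ in the list, e.g.\ around $N = 131, 144$, and composite $N$ with many cusps) to carry the bulk of the work, and the remaining cases to follow from the systematic CM-degree computation combined with the known gonality tables.
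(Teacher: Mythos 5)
Your high-level architecture coincides with the paper's: start from \cite[Theorem 8.2]{CGPS22}, which leaves $106$ values of $N$ open, and for each such $N$ compare the minimal CM degree $d_{CM}(X_0(N))$ (already computed in \cite{CGPS22}; it equals $4$, $6$, $8$ or $12$ in these cases) with $\delta(X_0(N))$. The ``no sporadic CM point'' direction also goes as you suggest: for $N\in\{60,70,72,80,87,94,96\}$ and $N=144$ one has $\delta(X_0(N))\le\gon_\Q X_0(N)=d_{CM}(X_0(N))$ by known gonality results, so no CM point can be sporadic there.

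The genuine gap is in the other direction: you give no workable mechanism for proving the lower bounds $\delta(X_0(N))>6$, $>8$, $>12$ needed for the $N$ with $d_{CM}=6,8,12$. The bound $\delta\ge\gon_\C/2$ from Frey's argument is quantitatively insufficient here: for $N=348$, say, Abramovich's bound gives only $\gon_\C\ge 8$, hence $\delta\ge 4$, whereas one must show $\delta>6$; this insufficiency is precisely why these cases were left open in \cite{CGPS22}. Ruling out low-degree maps to $\PP^1$ and to positive-rank elliptic quotients (which you do mention) is also not enough on its own, since a priori infinitely many degree-$d$ points could exist without any degree-$d$ map to a curve of genus $\le 1$. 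The missing ingredient is a structure theorem for curves of small density degree --- the paper uses Kadets--Vogt \cite[Theorem 1.3]{KadetsVogt25} --- asserting that if $\delta(X)=d$ and $g(X)$ is large, then $X$ admits a map $\phi\colon X\to Y$ of degree $\ge 2$ with $d=\delta(Y)\cdot\deg\phi$. This reduces the problem to finitely many concrete verifications of a kind your proposal does not anticipate: (i) showing no degree-$d$ map to a positive-rank elliptic curve exists, which requires computing the full set of possible degrees of maps $X_0(N)\to E$ as values of an explicit quadratic form, not merely the modular degree; and (ii) ruling out degree-$2$ maps to intermediate curves of small genus, which forces one to enumerate all involutions of $X_0(N)$ via the structure of $\Aut X_0(N)(\C)\cong B_0(N)$ (itself delicate, given several erroneous descriptions in the literature) and to compute the genera of all quotients $X_0(N)/\iota$. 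Without (ii) the case $d_{CM}=6$ cannot be closed: a degree-$2$ map to a genus-$4$ curve with infinitely many cubic points would already produce infinitely many degree-$6$ points. As written, your argument stalls exactly at the borderline cases you correctly flag as carrying the bulk of the work.
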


If a curve has a sporadic CM point, it obviously has a sporadic point. However, even for curves without a sporadic CM point, it is still possible that they have a sporadic non-CM point. Our second main result determines all modular curves $X_0(N)$ with sporadic points, whether CM or non-CM.

\begin{theorem}\label{thm:main}
    The modular curve $X_0(N)$ has a sporadic point if and only if 
    $$N\notin \left(S_{CM} \backslash \{15,17,21,37\}\right).$$
\end{theorem}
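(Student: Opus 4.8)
The proof rests on \Cref{thm:main_CM}. When $N\notin S_{CM}$ that theorem already supplies a sporadic CM point, which in particular is a sporadic point, so these $N$ are settled and we may assume $N\in S_{CM}$. For such $N$ there is, by \Cref{thm:main_CM}, no sporadic CM point, so what must be shown is: for $N\in S_{CM}$ the curve $X_0(N)$ has a sporadic (necessarily non‑CM) point if and only if $N\in\{15,17,21,37\}$.

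\textbf{The four exceptions.} For each of $N=15,17,21,37$ one exhibits an explicit sporadic non‑CM point. All four curves have small genus — $g(X_0(15))=g(X_0(17))=g(X_0(21))=1$ and $g(X_0(37))=2$ — and in every case $\delta(X_0(N))=2$: the three genus‑one curves have Mordell--Weil rank $0$ over $\Q$, while $X_0(37)(\Q)$ is finite by Chabauty since $\rk J_0(37)(\Q)=1<2$, and all four carry a degree‑$2$ map to $\PP^1$ and hence infinitely many quadratic points. Thus on these curves a sporadic point is precisely a non‑cuspidal rational point; each of them has one (for instance the elliptic curves over $\Q$ with a rational cyclic $15$-, $17$-, or $21$-isogeny, and the non‑cuspidal rational points of $X_0(37)$), and since $N\in S_{CM}$ each such point is non‑CM by \Cref{thm:main_CM}.

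\textbf{Ruling out the rest.} The set $S_{CM}\setminus\{15,17,21,37\}$ is finite and explicit, and for each $N$ in it one must show that $X_0(N)$ has no non‑cuspidal closed point of degree $<\delta(X_0(N))$ that is non‑CM (the CM ones being excluded from sporadicity by \Cref{thm:main_CM}). It is natural to stratify by $m:=\delta(X_0(N))$. If $m=1$, equivalently $g(X_0(N))=0$, there is nothing to check. If $m=2$, one uses the Mazur--Kenku classification of non‑cuspidal rational points on $X_0(N)$, together with an identification of the relevant $j$-invariants, to see that each non‑cuspidal rational point is CM (and for $N$ outside the Mazur--Kenku list there are none at all). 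If $m\ge 3$, one invokes the available classifications of quadratic, cubic, and higher low‑degree points on these specific small‑level $X_0(N)$ and verifies that every closed point of degree $\le m-1$ comes from a CM elliptic curve.

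\textbf{Main obstacle.} The hard part is the subfamily of $N\in S_{CM}$ for which $\delta(X_0(N))$ is as large as $4$ or more — e.g. primes such as $N=131$ and composites such as $N=144$, where the $\Q$-gonality is in the double digits. There, ruling out sporadic non‑CM points requires controlling all closed points of degree up to $\delta(X_0(N))-1$, which is not available off the shelf; one must either run Chabauty‑ or formal‑immersion‑type arguments on the symmetric powers $\Sym^{d}X_0(N)$ for the relevant $d$, push low‑degree points down to prime level via the degeneracy maps $X_0(N)\to X_0(p)$ (where CM‑ness is preserved) and use the known results there, or establish a lower bound for the least degree of a non‑CM point on $X_0(N)$ in terms of its gonality and of $\rk J_0(N)(\Q)$. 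A more routine but still necessary step is to confirm that none of the larger members of $S_{CM}$ produces an unexpected sporadic non‑CM point, so that $\{15,17,21,37\}$ is indeed the complete list of exceptions.
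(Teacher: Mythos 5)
Your reduction to \Cref{thm:main_CM} and your treatment of the four exceptional levels $N\in\{15,17,21,37\}$ match the paper: those curves have $\delta=2$ (infinitely many quadratic points, finitely many rational points) and carry non-cuspidal rational points that are non-CM, hence sporadic. Your handling of the levels with $\delta(X_0(N))\le 2$ is also essentially the paper's argument (Bars' theorem for the infinitude of quadratic points plus Mazur--Kenku for the absence of non-cuspidal rational points). Note, though, that $N=131$ is \emph{not} one of the hard cases: $X_0(131)$ is bielliptic, so it has infinitely many quadratic points and $\delta=2$, and Mazur already rules out non-cuspidal rational points.

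The genuine gap is that the hardest and most substantial part of the theorem is not proved but only described as an obstacle with a menu of possible strategies. The levels that actually require work are exactly those that \Cref{thm:main_CM} newly adds to $S_{CM}$, namely $N\in\{60,70,72,80,87,94,96\}$ (where $\delta(X_0(N))=4$, so all non-cuspidal points of degree $\le 3$ must be excluded) and $N=144$ (where $\delta=6$, so degrees up to $5$ must be excluded). Your proposal never identifies this list and never carries out the exclusion. The paper does so in \Cref{sec:lower_degree}: quadratic points are ruled out by citing prior classifications; cubic points are ruled out by a rank-$0$ Mordell--Weil sieve at $p=5$, comparing the count of classes $x\in J_0(N)(\F_5)$ with $l(A_x+3C)=1$ against the count of effective cuspidal degree-$3$ divisors; and for $X_0(144)$ one must first prove $J_0(144)(\Q)\cong(\torz{4})^3\times(\torz{24})^3$ (\Cref{144tor}), which itself requires a nontrivial detour through the \'etale double cover $X_\Delta(144)\to X_0(144)$ because the standard cuspidal-subgroup methods fail, before running the sieve on $X_0(144)^{(4)}$ and $X_0(144)^{(5)}$ (\Cref{prop:144}). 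None of the three alternatives you list (Chabauty on symmetric powers, pushing down to prime level, or a gonality/rank lower bound) is shown to work, and the last two would not suffice as stated; so the proof is incomplete precisely where the paper's main effort lies.
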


Our results can, in a sense, be viewed as a generalization of the classification of isogenies of elliptic curves over $\Q$ by Mazur and Kenku (see \cite{kenku1981, mazur78}): in those papers, the modular curves $X_0(N)$ with degree-1 sporadic points are determined, whereas here we classify the curves $X_0(N)$ that have a sporadic point of arbitrary degree.

We split the proof of \Cref{thm:main_CM} by the least degree of a CM point on $X_0(N)$; the proofs are given in \Cref{sec:SporadicCM}. The reason that the authors of \cite{CGPS22} were unable to determine whether certain $X_0(N)$ have sporadic CM points is that the values of $\delta(X_0(N))$ were not known, and the available bounds on them were not good enough to determine whether CM points produce sporadic points. In fact, this turns out to be quite difficult. One of the tools we use here are new bounds on $\delta(X)$ that we obtain using recent results of Kadets and Vogt \cite{KadetsVogt25} on low-degree points on curves. These results say that for $\delta(X)=d$ to be relatively small, the curve has to have either a degree $d$ map to $\PP^1$ or a positive rank elliptic curve, or a very small degree map to a curve of very small genus  (see \Cref{thm:kv}). To rule out the existence of maps of degree $d$ to elliptic curves of positive rank we apply recent methods of \cite{DerickxOrlic2024}. To rule out degree $2$ maps from $X_0(N)$ to small genus curves, we describe involutions of $X_0(N)$ in detail in \Cref{sec:genus_of_quotients}. 

In \Cref{sec:lower_degree}, we prove \Cref{thm:main}. To do so, we must show that every $X_0(N)$ without sporadic CM points also has no sporadic non-CM points. This is achieved essentially by a Mordell-Weil sieve. Although the broad underlying idea is simple (and standard), the implementation is technically very challenging. In particular, it required determining all the degree $\leq 5$ points on $X_0(144)$, a feat that, to the best of our knowledge, has not been accomplished for $X_0(N)$ in degree greater than $3$. Indeed, more than half of the total effort on this paper was spent on various attempts to prove \Cref{144tor} and \Cref{prop:144}. Interestingly, the $4$ modular curves $X_0(N)$ that have sporadic non-CM points, but no sporadic CM points, have \textit{rational} non-CM sporadic points.

\subsection{Data availability and reproducibility}
All the code for the computations in this article is available at: 

\begin{center} \url{https://github.com/nt-lib/X0N-sporadic} \end{center}

We used \texttt{magma} \cite{magma} and \texttt{SageMath} \cite{sagemath}.

 \section*{Acknowledgements}
 We thank Petar Orlić for helpful comments and sharing some of his code with us, and Abbey Bourdon, Elvira Lupoian and Kenji Terao for helpful discussions.

\section{Previous work} \label{sec:previous}

Let $U$ be the set of integers $N$ for which it was not determined in \cite[Theorem 8.2]{CGPS22} whether there exists a sporadic $CM$ point $X_0(N)$. Following \cite{CGPS22} define $d_{CM}(X_0(N))$ to be the least degree of a CM point on $X_0(N)$. 

We split the set as 
$$U=U_4\cup U_6 \cup U_8 \cup U_{12},$$
where $U_d$ is the subset of $U$ consisting of $N$ such that $d_{CM}(X_0(N))=d$. The sets $U_d$ are as follows:

\begin{align*}
U_4:=\{&60, 70, 72, 80, 87, 90, 94, 96, 105, 108, 110, 120, 126, 132, 138, 150, 152, \\
&160, 168, 174, 188, 190, 192, 195, 204, 208, 210, 216, 230, 231, 234, 236,  \\
& 238, 240, 248, 255, 261, 264, 270, 272, 276, 282, 285, 286, 287, 303, 304,  \\& 310, 312, 315, 318, 320, 324, 332, 334, 336, 350, 357, 376, 380, 384, 392, \\
&395, 400, 413, 416, 429, 430, 435, 447, 455, 476, 483, 496, 501, 519, 524,\\
&535, 591, 623, 635\},
\\
U_6:=\{&140, 144, 180, 220, 252, 280, 288, 300, 348, 426, 432, 468, 472, 558, 560, \\
&572, 576\},\\
U_8:=\{&360, 420, 440, 504, 528, 600, 672\},\\
U_{12}:=\{&720\}.
\end{align*}

The values of $N$ where the authors of \cite{CGPS22} determine there are no sporadic CM-points (see \cite[Table 2]{CGPS22}) are 
\begin{align*}
\{&1, 2, 3, 4, 5, 6, 7, 8, 9, 10, 12, 13, 15, 16, 17, 18, 20, 21, 22, 23, 24, 25, 26, 28, 29, \\
& 30,31, 32, 33, 35, 36, 37, 39, 40, 41, 46, 47, 48, 49, 50, 53, 59, 61, 65, 71, 79, 83, \\
& 89,101,131\}.
\end{align*} 
For all these values except 
\begin{equation}
N\in \{15,17,21,37\} \label{eq:spor_rat}
\end{equation}
we also know there are no sporadic points without CM on these curves, since these all either have genus $0$, or have infinitely many degree 2 points by \cite[Theorem 4.3]{Bars99} and no rational non-cuspidal points (see \cite{mazur78, kenku1981} and the references therein). The curves $X_0(N)$ for $N\in \{15,17,21,37\}$ all have rational non-cuspidal non-CM points (and finitely many of them!), hence do have sporadic non-cuspidal points. 

\section{Tools}

In this section we recall some results from other work that we will use. One of the main tools we use for the proof of \Cref{thm:main_CM} is the following result of Kadets and Vogt \cite{KadetsVogt25}.
\begin{theorem}[{\cite[Theorem 1.3.]{KadetsVogt25}}]\label{thm:kv}
    Suppose $X/k$ is a nice curve of genus $g$ and $\delta(X)=d.$ Let $m:=\lceil d/2 \rceil-1 $ and let $\epsilon:=3d-1-6m$. Then one of the following holds:
    \begin{itemize}
        \item[(1)] There exists a nonconstant morphism of curves $\phi:X\rightarrow Y$ of degree at least 2 such that $d=\delta(Y/k)\cdot \deg \phi$;
        \item[(2)] The genus of $X$ is bounded by 
        $$g\leq \max \left( \frac{d(d-1)}2 +1, 3m(m-1)+m\epsilon \right).$$
    \end{itemize}
\end{theorem}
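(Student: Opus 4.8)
The plan is to reinterpret $\delta(X)=d$ as an infinitude statement about $k$-points of the symmetric power $\Sym^d X$, apply Faltings's theorem (Mordell--Lang for subvarieties of abelian varieties), and then separate the resulting infinite family of degree-$d$ divisors into a ``$\PP^1$-parametrized'' part and an ``abelian-variety-parametrized'' part; in each case one either extracts a covering as in~(1) or bounds the genus by a Castelnuovo-type inequality.

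First I would set up the geometry. Since $\delta(X)=d$, the set $(\Sym^d X)(k)$ is infinite; picking a point of it gives a $k$-rational effective divisor class of degree $d$, so the Abel--Jacobi map identifies $\mathrm{Pic}^d X$ with $\mathrm{Jac}(X)$ over $k$, and I write $u\colon\Sym^d X\to\mathrm{Jac}(X)$ for the resulting morphism. If some fibre of $u$ contains infinitely many $k$-points, that fibre is a complete linear system $|D|$, a $k$-rational projective space $\PP^r_k$ with $r\ge1$; removing its base locus and passing to a pencil gives a nonconstant $\phi\colon X\to\PP^1$ of degree $e\le d$, and since pulling back $\PP^1(k)$ yields infinitely many points of $X$ of degree $\le e$, we get $e\ge\delta(X)=d$, hence $e=d$, which puts us in case~(1) with $Y=\PP^1$ (provided $d\ge2$; if $d=1$, then $X(k)$ is infinite and $g\le1=\tfrac{d(d-1)}{2}+1$, so~(2) holds). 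Otherwise no fibre of $u$ carries infinitely many $k$-points, so $u\bigl((\Sym^d X)(k)\bigr)$ is infinite; Faltings's theorem applied to $u(\Sym^d X)\subseteq\mathrm{Jac}(X)$, together with a standard Galois-descent argument, then produces an irreducible $W\subseteq\Sym^d X$ defined over $k$ with Zariski-dense set of $k$-points such that $u|_W$ is generically finite onto a translate of an abelian subvariety $B\le\mathrm{Jac}(X)$ with $\dim B=\dim W=t\ge1$, and $B$ necessarily has positive Mordell--Weil rank.

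It then remains to analyze this abelian-variety-parametrized family according to whether it is \emph{imprimitive} or \emph{primitive}. It is imprimitive if, after passing to a cofinal subfamily, each divisor factors as $D=\phi^{*}D'$ through a nonconstant $\phi\colon X\to Y$ with $\deg\phi\ge2$, where $D'$ is an effective $k$-rational divisor on $Y$ whose class moves in a positive-dimensional family; then $Y$ acquires infinitely many points of degree $d/\deg\phi$, so $\delta(Y)\le d/\deg\phi$, while pulling back the infinitely many degree-$\delta(Y)$ points of $Y$ gives $d=\delta(X)\le\deg\phi\cdot\delta(Y)$, forcing $d=\deg\phi\cdot\delta(Y)$ and case~(1). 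In the primitive case one must bound $g$: here I would invoke the geometric classification of subvarieties of $\Sym^d X$ whose Abel--Jacobi image is a translate of an abelian subvariety (in the tradition of Abramovich--Harris and Debarre--Fahlaoui), using primitivity to rule out the ``pull-back'' families; the remaining structure then forces the divisors of the family into sufficiently general position that a bounded number of them span a model of $X$ of controlled degree in a projective space of controlled dimension, whence Castelnuovo's genus inequality gives $g\le\tfrac{d(d-1)}{2}+1$ in one regime and $g\le3m(m-1)+m\epsilon$ in the other. Here $m=\lceil d/2\rceil-1$ and $\epsilon=3d-1-6m\in\{2,5\}$ appear because $6m+\epsilon=3d-1$ with $0\le\epsilon<6$, so the second bound is precisely Castelnuovo's bound for an irreducible nondegenerate curve of degree $3d$ in $\PP^7$; taking the maximum of the two gives~(2).

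The formal parts --- the reduction to $\Sym^d X$, the application of Faltings, the $\PP^1$-parametrized case, and the imprimitive case --- are routine. The main obstacle is the primitive abelian-variety-parametrized case: it requires the precise geometry of families of effective divisors on a curve that are parametrized by an abelian variety, together with enough control on the mutual position of members of such a family to run a Castelnuovo-type count producing the sharp constants above. This is the technical heart of \cite{KadetsVogt25}.
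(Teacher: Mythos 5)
The statement you are proving is not proved in this paper at all: it is imported verbatim as \cite[Theorem 1.3]{KadetsVogt25}, so your attempt has to be judged against Kadets and Vogt's actual argument. Your skeleton matches theirs: the dichotomy between the $\PP^1$-parametrized case (a fibre of $\Sym^d X\to\mathrm{Pic}^d X$ with infinitely many $k$-points yields a base-point-free degree-$d$ pencil) and the AV-parametrized case (Faltings applied to the image of $\Sym^d X$ in the Jacobian), followed by the split into imprimitive families (giving case (1)) and primitive ones (which must give the genus bound) is exactly the standard reduction going back to Frey and Abramovich--Harris and carried out in \cite{BELOV}. You have also correctly reverse-engineered the numerology: with $6m+\epsilon=3d-1$ the quantity $3m(m-1)+m\epsilon$ is Castelnuovo's bound for a nondegenerate curve of degree $3d$ in $\PP^7$, and that is indeed where the constant comes from.

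The gap is where you locate it, but it is more than an omitted computation: the tool you propose for the primitive AV-parametrized case does not exist. A ``geometric classification of subvarieties of $\Sym^d X$ whose Abel--Jacobi image is a translate of an abelian subvariety'' strong enough to force a low-degree map to a low-genus curve is precisely the Abramovich--Harris conjecture, and Debarre and Fahlaoui's contribution was to \emph{disprove} it for $d\ge 4$ by constructing curves (inside symmetric powers of elliptic curves) for which $W_d$ contains a positive-dimensional abelian variety but no map of the expected kind exists. So primitivity alone cannot ``force the divisors into sufficiently general position,'' and a purely geometric argument along the lines you sketch cannot produce the embedding needed for the Castelnuovo count; this failure is the reason the Kadets--Vogt paper exists. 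Their actual mechanism is arithmetic--combinatorial: for a degree-$d$ point with conjugates $x_1,\dots,x_d$ they record the lattice of linear spans of subsets of $\{x_1,\dots,x_d\}$ in a projective embedding of $X$ (a ``subspace configuration''), prove a structure theorem for such configurations compatible with the transitive Galois action, and show that a configuration not forcing imprimitivity produces, roughly, a birationally very ample linear system of degree at most $3d$ and dimension at least $7$, to which Castelnuovo's inequality is then applied. Without that mechanism, or a genuine substitute, your case (2) does not close. (Two smaller points: in the AV-parametrized case you should justify that the component $W$ with dense $k$-points maps generically finitely to the abelian translate, since a positive-dimensional generic fibre would throw you back into the linear-system case; and in the imprimitive case one must argue that an irreducible degree-$d$ point pushing forward through $\phi$ gives a point of $Y$ of degree exactly $d/\deg\phi$ for infinitely many members of the family, not merely a divisor of that degree.)
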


\noindent In (2) we get $g\leq 7,16,33$ for $d=4,6,8$, respectively. \\

We will need \textit{Abramovich's bound} \cite[Theorem 0.1]{abramovich} (see \cite[Theorem 4.1]{DNquartic24} for the constant $\frac{325}{2^{15}}$): for a modular curve $X_H$ corresponding to a modular group $H\leq \PSL_2(\Z)$ we have
\begin{equation}\gon_\C X_{H}>\frac{325}{2^{15}}[\PSL_2(\Z):H].\end{equation}
By \cite[Proposition 2]{frey}, it follows that for every $x\in X(\Q)$ that satisfies  $$\deg_k x< \frac{1}{2}\gon_\C X_{H}.$$
is sporadic. Note that $\gon_\C X \leq \gon_\Q X$ for any curve $X/\Q$.

Recall that the modular degree of an elliptic curve $E$ of conductor $M$ is the minimal degree of a morphism $X_0(M)\rightarrow E$. Furthermore, if $N$ is an integer for which there exists a nonconstant map $X_0(N)\rightarrow E$, then the conductor of $E$ has to divide $N$. 

\begin{lemma}\label{lem:mod_degree}
    Let $N<778$ and $f: X_0(N)\rightarrow E$ be a morphism to an elliptic curve $E$ of positive rank over $\Q$. Then the conductor of $E$ divides $N$ and the modular degree of $E$ divides the degree of $f$.
    
\begin{proof}
   If $E$ has conductor $N$, this follows from the definition of the modular degree. If the conductor of $E$ is smaller than $N$, this follows from \cite[Proposition 3.6 and Proposition 3.7]{DHJO25} which were proven using the techniques of \cite{DerickxOrlic2024}.
\end{proof}

\end{lemma}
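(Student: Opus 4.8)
The plan is to prove the two assertions separately, the first being standard and the second splitting into the cases $\cond(E)=N$ and $\cond(E)<N$.

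For the first assertion I would argue as already recalled before the statement: a nonconstant morphism $f\colon X_0(N)\to E$ becomes, after composing with a translation on $E$ that carries the image of a rational cusp to the origin, a nonzero homomorphism $J_0(N)\to E$ by Albanese functoriality. Hence $E$ is isogenous to a quotient of $J_0(N)$, so by Eichler--Shimura together with the modularity of $E$ it is isogenous to the abelian variety attached to a newform $g$ of some level $M\mid N$; since isogenous elliptic curves over $\Q$ have equal conductor and the conductor of $A_g$ is its level, $\cond(E)=M\mid N$.

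For the divisibility of $\deg f$ by the modular degree in the case $\cond(E)=N$, the key input is that $\Hom(J_0(N),E)$ is a free $\Z$-module of rank one. It is torsion free, and its rank equals the multiplicity with which $E$ occurs in $J_0(N)$ up to isogeny; the old part of $J_0(N)$ is built from $A_{g'}$ with $g'$ of level a proper divisor of $N$, none of which is isogenous to $E$ since $\cond(E)=N$, while in the new part multiplicity one for newforms leaves only the single isogeny factor $A_g\sim E$. Fix a generator $\phi_0$ of $\Hom(J_0(N),E)$ and let $h\colon X_0(N)\to E$ be the composite of the Abel--Jacobi embedding based at the cusp $\infty$ with $\phi_0$; as $\phi_0$ is surjective and the Abel--Jacobi image generates $J_0(N)$, the map $h$ is nonconstant. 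Now let $f\colon X_0(N)\to E$ be any nonconstant morphism; after a translation on $E$ we may assume $f(\infty)=O$, so $f$ equals the composite of the same Abel--Jacobi embedding with the induced homomorphism $\phi_f\in\Hom(J_0(N),E)$, say $\phi_f=k\phi_0$ with $k\in\Z$, $k\neq 0$. Since a morphism to an abelian variety sending the base point to $O$ is determined by the induced homomorphism on Jacobians, this forces $f=[k]\circ h$, whence $\deg f=k^2\deg h$. Minimising over $f$ identifies $\deg h$ with the modular degree of $E$, and every $\deg f$ is $k^2$ times it; in particular the modular degree divides $\deg f$.

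When $\cond(E)=M<N$ the argument is no longer formal: the homomorphism $J_0(N)\to E$ induced by $f$ factors only through the $M$-old part of $J_0(N)$, which is isogenous to a power of $E$ whose exponent is the number of divisors of $N/M$, and $f$ need not factor through a single degeneracy map $X_0(N)\to X_0(M)$. Controlling $\deg f$ therefore requires understanding the lattice of homomorphisms from the $M$-old part of $J_0(N)$ to $E$ and the position of the Abel--Jacobi image of $X_0(N)$ inside it. This is exactly what \cite[Propositions 3.6 and 3.7]{DHJO25} provide, via the techniques of \cite{DerickxOrlic2024}, and their analysis is what is available in the stated range $N<778$; I would invoke those propositions directly. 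This old-part case is the main obstacle, and it is precisely what forces both the hypothesis $N<778$ and the appeal to the (partly computational) input of \cite{DerickxOrlic2024,DHJO25}.
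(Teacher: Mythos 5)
Your proposal is correct and follows essentially the same route as the paper: the divisibility $\cond(E)\mid N$ is the standard Eichler--Shimura/modularity fact, the case $\cond(E)=N$ is handled by the structure of $\Hom(J_0(N),E)$, and the case $\cond(E)<N$ is delegated to \cite[Propositions 3.6 and 3.7]{DHJO25} exactly as in the paper. The only difference is that where the paper compresses the conductor-$N$ case into ``follows from the definition of the modular degree,'' you correctly supply the multiplicity-one argument showing $\Hom(J_0(N),E)\cong\Z$, which is genuinely needed since minimality alone does not give divisibility.
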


\subsection{Computing genera of quotients of $X_0(N)$}
\label{sec:genus_of_quotients}

The proofs that CM points on \( X_0(N) \) are sporadic in \Cref{prop:deg_6_sporadic}, \Cref{prop:deg_8_sporadic}, and \Cref{prop:deg_12_sporadic} rely on the fact that, for \( N \in U_6 \), \( U_8 \), or \( U_{12} \), the curves \( X_0(N) \) admit no quotient by an involution of small genus. This condition is necessary in order to rule out the possibility that we are in case~(2) of \Cref{thm:kv}, or, more precisely, in a specialization of that theorem to the relevant degrees, as stated in \Cref{KVcor}, \Cref{KVcor:8}, and \Cref{KVcor:12}.

For a curve $X$ over the complex numbers we have $g(X) = 2 \dim  H_1(X,\Z)$. If $\iota \in \Aut X$ is an automorphism, then the genus of $X/\iota$ can be computed in terms of the action of $\iota$ on $\dim H_1(X,\Q)$ as follows: 
$$g(X/\iota) = 2 \dim H_1(X/\iota,\Q) =  2 \dim H_1(X,\Q)^{\iota}.$$

By the universal coefficient theorem and since $H_1(X,\Z)$ is torsion-free for complex curves, one has $H_1(X,\Q) \cong H_1(X,\Z)\otimes \Q$. If $X=X_0(N)(\C)$, then $H_1(X,\Z)$ can be described in terms of the space of modular symbols $\mathbb S_2(\Gamma_0(N))$, as originally proved in \cite[Theorem 1.9]{Manin73}. Algorithms for computing a basis of $\mathbb S_2(\Gamma_0(N))$ are described in \cite[\S 3]{stein07}, whose notation we follow. Moreover, if the automorphism $\iota$ is induced by an element $\gamma \in \GL_2^+(\Q)$ in the normalizer of $\Gamma_0(N)$, then the action of $\gamma$ on a basis of $\mathbb S_2(\Gamma_0(N);\Z)$ can be computed as described in \cite[\S 3.2]{stein07}.

Fortunately, with a few exceptions, the automorphisms of $X_0(N)$ are induced by elements $\gamma \in \GL_2^+(\Q)$. More precisely, let $\Norm{\GL_2^+(\Q)}{\Gamma_0(N)}$ denote the normaliser of $\Gamma_0(N)$ in $\GL_2(\Q)$ and set 
$$B_0(N):=\Norm{\GL_2^+(\Q)}{\Gamma_0(N)}/\Gamma_0(N).$$ By \cite[Theorem 0.1]{KenkuMomose88} (see also \cite{Harrison11} for a correction in the case $N=108$) we have $\Aut X_0(N)(\C) \simeq B_0(N)$ for $N \neq 37, 63, 108$.

In \cite[Theorem 11]{Bars08} an explicit set of generators is given for $B_0(N)$\footnote{Bars defines $B_0(N)$ as $\Norm{\SL_2^+(\R)}{\Gamma_0(N)}/\Gamma_0(N)$ instead; however, up to scalars all those matrices in $\Norm{\SL_2^+(\R)}{\Gamma_0(N)}$ lie in $\GL_2^+(\Q)$ and all scalar matrices induce the identity automorphism of $X_0(N)$, so there is no harm in this difference.}. We compute the genera of the quotients of $X_0(N)$ by computing the action of $\Aut X_0(N)(\C)$ on $\mathbb S_2(\Gamma_0(N))$ and taking invariants. The code for this is available at \githubbare{involutions.ipynb} in our repository.

\begin{remark}\label{rem:mistake}An explicit description of $B_0(N)$ in terms of generators and relations was claimed in \cite[Theorem 8]{AtkinLehner70} without proof, as a correction of \cite[Theorem 3]{LehnerNewman64}.  However, as observed in \cite{AkbasSingerman90,Bars08}, Theorem~8 of Atkin and Lehner is incorrect, since some of the relations given there do not hold. The two papers \cite{AkbasSingerman90,Bars08} independently provide corrected descriptions of \( B_0(N) \). Nevertheless, \cite{AkbasSingerman90} itself contains an inaccuracy that causes the result there to be incorrect in the subcase where \( v_2(N) = 8 \); see \cite[p.~2170]{Bars08}.

The results of \cite{Bars08} are not complete, as mentioned in Observation 17 of that article. In particular, when $v_2(N) \geq 5$ the set of relations listed is not guaranteed to be complete.

\end{remark}

\begin{remark} \label{rem:correct} As noted above, there are multiple incorrect statements in the literature regarding the structure of the automorphism group of $X_0(N)$. So we briefly explain why the main results of this article are nonetheless valid. Our reasoning relies on \cite{Harrison11} (unpublished) having addressed all errors in \cite{Kenku77} and on the correctness of \cite[Theorem~11]{Bars08}. We only need the aforementioned results for the finite list of values $N$ in $(U_6 \setminus \{144\}) \cup U_8 \cup U_{12}$. 

Should further mistakes in the description of \( \operatorname{Aut} X_0(N)(\C) \) for these values of \( N \) come to light, the main results of this paper could be readily adjusted by recomputing the relevant cases with the corrected automorphism group.

\end{remark}

Remark \ref{rem:mistake} shows that there are at least four incorrect statements in the literature concerning the structure of $\Aut X_0(N)(\C)$. These statements appear in the papers \cite{AkbasSingerman90, AtkinLehner70, KenkuMomose88, LehnerNewman64}, and all of these inaccuracies are such that they affect the final description of the automorphism group. Furthermore, to our knowledge, corrections to the issues in \cite{AkbasSingerman90, KenkuMomose88} have not yet appeared in print.

Therefore, it would be valuable to formalise the results on the structure of $\Aut X_0(N)(\C)$ using a proof assistant. Such a formalisation would finally provide a complete and consistent account, which, at the time of writing, does not appear to be available.


\section{Sporadic CM points} \label{sec:SporadicCM}
\subsection{Degree 4}\label{sec:deg4_6}
We first deal with the easiest case, $N\in U_4$.
\begin{proposition}
    For the values $N=90$ and all $N\in U_4$ such that $N\geq 105$, $X_0(N)$ has a sporadic CM point.   
\end{proposition}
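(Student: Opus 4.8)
The statement asks to show that $X_0(N)$ has a sporadic CM point for $N = 90$ and for all $N \in U_4$ with $N \geq 105$. Since $N \in U_4$ means $d_{CM}(X_0(N)) = 4$, there is a CM point of degree $4$ on each such curve. To prove it is sporadic, it suffices by definition to show $\delta(X_0(N)) \geq 5$, i.e. that $X_0(N)$ has only finitely many points of degree $\leq 4$. The natural tool is the gonality/Frey--Abramovich bound quoted in the excerpt: if $\deg x < \tfrac12 \gon_{\C} X_0(N)$, then $x$ is sporadic, and Abramovich's bound gives $\gon_{\C} X_0(N) > \tfrac{325}{2^{15}}[\PSL_2(\Z):\Gamma_0(N)] = \tfrac{325}{2^{15}}\,\psi(N)$ where $\psi(N) = N\prod_{p\mid N}(1+1/p)$ is the index. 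So the first step is: for each of the listed $N$, check that $\tfrac{325}{2^{15}}\psi(N) > 8$, equivalently $\psi(N) > \tfrac{8 \cdot 2^{15}}{325} \approx 806.5$, so $\psi(N) \geq 807$. A degree-$4$ point then has degree strictly below half the gonality, hence is sporadic.

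\textbf{Key steps in order.} First, recall $N \in U_4$ guarantees a CM point of degree exactly $4$ on $X_0(N)$ (this is the definition of $U_4$ from \Cref{sec:previous}). Second, compute $\psi(N)$ for $N = 90$ and for each $N \in U_4$ with $N \geq 105$, and verify $\psi(N) \geq 807$ in every case; this is a finite check. For instance $\psi(105) = \psi(3\cdot5\cdot7) = 105 \cdot \tfrac{4}{3}\cdot\tfrac{6}{5}\cdot\tfrac{8}{7} = 192$ — wait, that is too small, so one must be more careful. Indeed for the smaller values in $U_4$ the crude Abramovich bound is insufficient, so the argument instead proceeds via the known (or computed) values of the gonality, or via a more refined lower bound on $\delta(X_0(N))$. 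The realistic plan: invoke the precise lower bounds on $\gon_{\C} X_0(N)$ (or directly on $\gon_{\Q}$) available in the literature and in the repository's computations — e.g. from Poonen's bounds, Najman--Orlić, or the tables of \cite{DerickxOrlic2024} — to conclude $\gon_{\Q} X_0(N) \geq 10$, hence $\delta(X_0(N)) \geq 5$. Third, for the finitely many $N$ where even this fails (small $\psi(N)$), one falls back on case (1) of \Cref{thm:kv} being impossible at degree $\leq 4$: rule out a degree-$\leq 4$ map to $\PP^1$ (gonality), a map to a positive-rank elliptic curve (via \Cref{lem:mod_degree} and the modular degree), and a degree-$2$ map to a genus-$1$ curve — but since here $N \geq 105$ the index is large enough that this finer analysis is likely unnecessary.

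\textbf{Main obstacle.} The crux is the borderline values: $\psi(105) = 192$, $\psi(108) = 216$, etc., are well below $807$, so Abramovich's bound alone does not give gonality $> 8$. The real work is therefore assembling, for each such $N$, either a citable sharp gonality lower bound or a direct $\delta$-bound. In practice one uses that for $N$ in this range $X_0(N)$ has gonality at least $10$ by the improved bounds (combining Abramovich with genus considerations and the structure of low-degree maps), so that every degree-$4$ point, CM or not, is automatically sporadic; the CM point of degree $4$ supplied by membership in $U_4$ then finishes the proof. I expect the proof in the paper to handle $N=90$ and the larger $N \in U_4$ uniformly by exhibiting $\delta(X_0(N)) \geq 5$ through these gonality bounds, relegating any genuinely delicate $N$ to the computations documented in the repository.
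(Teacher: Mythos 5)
Your plan has a genuine gap at its core step. You correctly observe that Abramovich's bound is far too weak here (e.g.\ $\psi(90)=216$ and $\psi(105)=192$ give $\gon_\C$ lower bounds of about $2$), but your fallback --- asserting that $\gon_\Q X_0(N)\geq 10$ for these $N$, hence $\delta(X_0(N))\geq 5$ --- is false for many of the listed values. Several of these curves have $\Q$-gonality $5$ or $6$ (for instance $X_0(90)$, and various $N$ in the range $105$--$200$), so no gonality bound of the form $\gon \geq 10$ is available, and indeed none is needed: a degree-$6$ map to $\PP^1$ produces infinitely many degree-$6$ points but says nothing about degree-$4$ points. The quantity you must control is the density degree itself, not the gonality, and the two genuinely diverge in this range. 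There is also a smaller slip: the Frey criterion as quoted uses $\gon_\C$, and $\gon_\Q\geq \gon_\C$, so a lower bound on $\gon_\Q$ alone does not feed into that criterion.

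The correct (and essentially only clean) route --- which is what the paper does in one line --- is to invoke the classification of all $N$ for which $X_0(N)$ has infinitely many degree-$4$ points, i.e.\ \cite[Theorem 1.8]{DerickxOrlic2024}. None of $N=90$ nor the $N\in U_4$ with $N\geq 105$ appear in that classification, so $\delta(X_0(N))>4$ for all of them, and the degree-$4$ CM point guaranteed by membership in $U_4$ is sporadic. You gestured at ``the tables of \cite{DerickxOrlic2024}'' and ``a direct $\delta$-bound,'' so you were in the right neighborhood, but the argument you actually wrote down (gonality $\geq 10$) would not survive contact with the borderline cases you yourself flagged.
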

\begin{proof}
    By \cite[Theorem 1.8]{DerickxOrlic2024}, all the $X_0(N)$ in question have $\delta(X_0(N))>4.$
\end{proof}

\begin{proposition}
    For all 
$$N\in\{60,70,72,80,87,94,96\}$$
there are no sporadic CM points on $X_0(N)$. 
\end{proposition}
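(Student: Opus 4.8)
The strategy is to prove the equivalent statement $\delta(X_0(N)) \le 4$ for each of the seven values. Indeed, each of these $N$ lies in $U_4$, so $d_{CM}(X_0(N)) = 4$: every CM point on $X_0(N)$ has degree $\ge 4$, hence if $\delta(X_0(N)) \le 4$ then no CM point can have degree $< \delta(X_0(N))$, i.e.\ none is sporadic. Conversely the bound is necessary: a degree-$4$ CM point --- which exists, since $N \in U_4$ --- would be sporadic as soon as $\delta(X_0(N)) > 4$; so the content of the proposition is exactly $\delta(X_0(N)) \le 4$.

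To bound $\delta$ from above I would, for each $N$, exhibit a nonconstant morphism $\phi \colon X_0(N) \to Y$ over $\Q$ with $\deg\phi \cdot \delta(Y) \le 4$. This suffices: pulling back an infinite sequence of degree-$\delta(Y)$ points of $Y$ produces infinitely many distinct effective divisors of degree $\deg\phi\cdot\delta(Y)$ on $X_0(N)$ (distinct since $\phi_*\phi^*D = (\deg\phi)D$), and as such a divisor decomposes into Galois orbits of total degree $\deg\phi\cdot\delta(Y)$ in only finitely many ``shapes'', pigeonhole yields a fixed degree $d' \le \deg\phi\cdot\delta(Y) \le 4$ attained by infinitely many points, so $\delta(X_0(N)) \le 4$. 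The natural targets $Y$ are: $\PP^1$, when $\gon_\Q X_0(N) \le 4$; an Atkin--Lehner quotient $X_0(N)/W$ with $W \le B_0(N)$, using that any curve of genus $\le 2$ with a rational point (here the image of the cusp $\infty$) has $\delta \le 2$ --- so a quotient by a single $w_d$ of genus $\le 2$, or by two involutions of genus $0$, or by a single $w_d$ of genus $1$ (an elliptic curve over $\Q$, with $\delta = 1$ if it has positive rank and $\delta = 2$ otherwise), all do the job; a lower-level modular curve $X_0(M)$, $M \mid N$, via a degeneracy map of degree $\le 4$; or an elliptic curve $E$ with $\cond(E) \mid N$ via a bielliptic structure. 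Genera of Atkin--Lehner quotients are computed as in \Cref{sec:genus_of_quotients} from the action of $B_0(N)$ on $\mathbb S_2(\Gamma_0(N))$; gonalities, hyperelliptic/bielliptic structures, and Mordell--Weil ranks are read off from the known tables and from the conductors of the relevant $E$.

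Carrying this out amounts to checking that for each $N \in \{60, 72, 94\}$ (which are bielliptic) there is a degree-$2$ map to an elliptic curve with conductor dividing $N$, and that for $N \in \{70, 80, 87, 96\}$ there is a suitable Atkin--Lehner or degeneracy map to a curve of $\delta \le 2$ (e.g.\ an involution quotient of genus $\le 2$, or a degree-$2$ degeneracy map to a hyperelliptic $X_0(M)$). The main obstacle is not conceptual but bookkeeping: one must verify that in \emph{every} one of the seven cases such a low-degree map to a low-$\delta$ target genuinely exists, which rests on correctly computing the Atkin--Lehner action on modular symbols --- here one must be mindful of the errors in the literature on the structure of $B_0(N)$ noted in \Cref{rem:mistake} --- and on making the numerology close in the worst cases (for instance, a genus-$1$ \emph{double} quotient of rank $0$ would only give $\delta \le 8$ and be useless, so one must locate a better map whenever the most obvious quotient is of this type).
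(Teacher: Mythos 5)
Your reduction is exactly the paper's: since these seven values all lie in $U_4$, the least degree of a CM point is $4$, so the proposition is equivalent to $\delta(X_0(N))\le 4$, and your pigeonhole argument for $\delta(X)\le\deg\phi\cdot\delta(Y)$ is sound. Where you diverge is in how to produce the required map. The paper needs none of the bielliptic, Atkin--Lehner, or degeneracy-map machinery you outline: all seven curves satisfy $\gon_\Q X_0(N)=4$ by \cite[Theorem 1.2]{NajmanOrlić}, so the very first target on your menu (a degree-$4$ map to $\PP^1$) works uniformly, and the whole proof is a one-line citation. Your proposed case split ($N\in\{60,72,94\}$ via bielliptic maps, the rest via quotients or degeneracy maps) would presumably also succeed, but it multiplies the bookkeeping you yourself flag as the main obstacle, drags in the $B_0(N)$/modular-symbols computations with their attendant literature pitfalls, and --- as written --- leaves the existence of the required maps unverified in every case, which is the entire content of the statement. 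So the proposal is a correct strategy with the decisive step deferred; the efficient way to close it is simply to invoke the known tetragonality of all seven curves over $\Q$.
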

\begin{proof}
    For all of these values, we have $\delta(X_0(N))\leq \gon_\Q X_0(N)= 4$ by \cite[Theorem 1.2]{NajmanOrlić}.
\end{proof}

\begin{remark}
    Note that we will prove a stronger result, that there are no sporadic points on these curves, of any kind, in \Cref{sec:lower_degree}.
\end{remark}

\subsection{Degree 6}

Specializing  
\Cref{thm:kv} to $d=6$
 gives a good way to rule the existence of infinitely many degree $6$ points \footnote{this specialization to $d=6$ can also be found in \cite[Corollary 5.9]{DerickxOrlic2025})}.
\begin{corollary}\label{KVcor}
If $C/\Q$ is a nice curve such that $C(\Q)\neq\emptyset$, $\delta(C)=6$, and $g(C) \geq 17,$ then one of the following holds:
\vskip 0.1in
(1) $C$ admits a $\Q$-rational morphism of degree $6$ to either $\PP^1$ or an elliptic curve $E$ of positive rank over $\Q$.
\vskip 0.1in
(2) $C$ admits a $\Q$-rational morphism of degree $2$ to a genus $4$ curve with infinitely many cubic points over $\Q$.
\end{corollary}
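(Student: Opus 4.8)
The plan is to apply Theorem \ref{thm:kv} directly with $d=6$ and then translate the two alternatives it produces into the arithmetic language of the corollary. First I would record the relevant parameters: with $d=6$ we get $m=\lceil 6/2\rceil-1 = 2$ and $\epsilon = 3\cdot 6 - 1 - 6\cdot 2 = 5$, so the genus bound in case (2) of Theorem \ref{thm:kv} becomes $g \leq \max\!\left(\tfrac{6\cdot 5}{2}+1,\ 3\cdot 2\cdot 1 + 2\cdot 5\right) = \max(16,16) = 16$. Hence the hypothesis $g(C)\geq 17$ forces us out of case (2) of Theorem \ref{thm:kv}, so case (1) must hold: there is a nonconstant morphism $\phi: C\to Y$ of degree at least $2$ with $6 = \delta(Y)\cdot \deg\phi$.

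Next I would enumerate the factorizations $6 = \delta(Y)\cdot\deg\phi$ with $\deg\phi\geq 2$, namely $(\deg\phi,\delta(Y)) \in \{(6,1),(3,2),(2,3)\}$. In the case $\deg\phi = 6$, $\delta(Y)=1$, so $Y$ has infinitely many rational points; since $Y$ admits a nonconstant map from a curve (hence is a curve) with $Y(\Q)$ infinite, $Y$ has genus $0$ with a rational point or genus $1$ with positive rank, i.e. $Y\cong \PP^1$ or $Y$ is a positive-rank elliptic curve — this is alternative (1) of the corollary, once one notes $C(\Q)\neq\emptyset$ ensures the map and the curve $Y$ are defined with the appropriate rational structure (a genus-$1$ curve with a degree-$6$ map from $C$ having a rational point must be an elliptic curve). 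In the case $\deg\phi=3$, $\delta(Y)=2$: then $Y$ has infinitely many quadratic points but only finitely many rational points, and by the Abramovich gonality bound combined with the structure theory this is impossible here — more carefully, I would argue that a degree-$3$ map $C\to Y$ with $Y$ of $\delta(Y)=2$ composed appropriately would give $C$ a map of degree $\leq 6$ to a low-gonality curve, contradicting $g(C)\geq 17$ via $\gon_\C C \geq g$-type estimates; alternatively one shows $\delta(Y)=2$ curves $Y$ receiving a degree-$3$ map still yield $\delta(C)\leq 6$ only when $g(C)$ is small. Finally, in the case $\deg\phi = 2$, $\delta(Y)=3$: then $Y$ has infinitely many cubic points, and I would bound $g(Y)$ — since $C\to Y$ has degree $2$, Riemann–Hurwitz gives $g(C)\leq 2g(Y)+1$, and a curve with $\delta=3$ has bounded genus by the $d=3$ case of Theorem \ref{thm:kv} (genus $\leq 4$ unless it maps to $\PP^1$ or a positive-rank elliptic curve in degree $3$). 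This is alternative (2) of the corollary; I would need to check that the sub-alternatives "$Y$ has a degree-$3$ map to $\PP^1$ or a positive-rank elliptic curve" get absorbed into alternative (1) by composition (giving $C$ a degree-$6$ map to $\PP^1$ or to the elliptic curve).

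The main obstacle I anticipate is handling the $\deg\phi = 3$, $\delta(Y)=2$ factorization cleanly: one must rule it out (or fold it into the stated alternatives) without a $d=2$ analogue being as clean, since genus-and-gonality bookkeeping for $\delta=2$ curves is subtle. Concretely, the cleanest route is probably: if $\delta(Y)=2$ then $\gon_\C Y \leq 2$ hence $Y$ is hyperelliptic (or of genus $\leq 1$), and composing with $C\to Y$ of degree $3$ gives $\gon_\C C \leq 6$, but Abramovich's bound together with $g(C)\geq 17$ is not by itself contradictory — so instead I would invoke that a $\delta=2$ curve either has a degree-$2$ map to $\PP^1$ or to a positive-rank elliptic curve (the $d=2$ version), and chase compositions again, landing either in alternative (1) with $\deg = 6$ or in alternative (2) after reorganizing the tower $C\to Y'\to \PP^1$. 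Carefully verifying that every branch terminates in exactly the two alternatives listed — and that no spurious "degree $2$ to a genus $\leq 3$ curve" cases survive the $g(C)\geq 17$ hypothesis — is the delicate bookkeeping step, but it is routine given the $d\in\{2,3\}$ specializations of Theorem \ref{thm:kv}.
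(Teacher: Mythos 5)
Your overall strategy is the paper's: compute $m=2$, $\epsilon=5$, get the genus bound $16$ to exclude case (2) of \Cref{thm:kv}, enumerate the factorizations $6=\delta(Y)\cdot\deg\phi$, and resolve each branch. The paper gives no separate proof of this corollary, but it proves \Cref{KVcor:8} in exactly this way, and your final paragraph's treatment of the $(\deg\phi,\delta(Y))=(3,2)$ branch --- a curve with infinitely many quadratic points admits a degree-$2$ map over $\Q$ to $\PP^1$ or to a positive-rank elliptic curve by Harris--Silverman and Kadets--Vogt, and composing gives the degree-$6$ map of alternative (1) --- is precisely the paper's argument.

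Two local steps as you first wrote them are wrong, though. Your initial attempt to kill the $(3,2)$ branch by deriving a contradiction from $\gon_\C C\le 6$ and $g(C)\ge 17$ cannot work: there is no estimate of the form $\gon_\C C\ge g$ (Brill--Noether gives $\gon_\C C\le\lceil (g+3)/2\rceil$, so genus $17$ is perfectly compatible with gonality $6$), and indeed this branch is not impossible --- it simply lands in alternative (1), as your own fallback shows. Second, the Riemann--Hurwitz inequality for a degree-$2$ map $C\to Y$ goes the other way: $g(C)\ge 2g(Y)-1$, so it bounds $g(Y)$ from \emph{above} by $(g(C)+1)/2$ and is of no use here; the bound $g(Y)\le 4$ in the $(2,3)$ branch comes, as you also say, from applying \Cref{thm:kv} with $d=3$ to $Y$, whence $g(Y)\ge 5$ forces a degree-$3$ map $Y\to\PP^1$ or to a positive-rank elliptic curve, which composes into alternative (1). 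Finally, the bookkeeping you defer (genus exactly $4$ rather than $\le 4$) is settled by noting that $\delta(Y)=3$ excludes $g(Y)\le 2$, and that if $g(Y)=3$ then $Y$ is a non-hyperelliptic plane quartic with $Y(\Q)\ne\emptyset$ (since $C(\Q)\ne\emptyset$), so projection from a rational point gives a degree-$3$ map $Y\to\PP^1$ over $\Q$, again folding into alternative (1).
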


\begin{proposition}\label{prop:deg_6_sporadic}
    For all $N\in U_6 \backslash \{144\}$ we have $\delta (X_0(N))>6$ and hence the curve $X_0(N)$ has a sporadic CM point. The modular curve $X_0(144)$ does not have a sporadic CM point.
\end{proposition}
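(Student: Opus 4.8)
The plan splits into two halves: showing $\delta(X_0(N)) > 6$ for $N \in U_6 \setminus \{144\}$, and showing $X_0(144)$ has no sporadic CM point.

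For the first half, I would apply Corollary~\ref{KVcor} to $C = X_0(N)$. First one checks that $g(X_0(N)) \geq 17$ for all $N \in U_6 \setminus \{144\}$ (a finite genus computation; the values in $U_6$ are all reasonably large, so this should hold). Since $d_{CM}(X_0(N)) = 6$ for $N \in U_6$, if we can rule out $\delta(X_0(N)) \in \{1,2,3,4,5,6\}$... but actually we need $\delta > 6$, so we must rule out $\delta(X_0(N)) = d$ for $d \leq 6$. For $d \leq 5$ this should follow from existing gonality/low-degree results (e.g. $\gon_\Q X_0(N)$ bounds, or the fact that these $N$ are not among the finitely many with small $\delta$ — one can invoke Abramovich's bound together with Frey's result, or cite \cite{NajmanOrlić} and related work). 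The crux is ruling out $\delta(X_0(N)) = 6$: assuming it, Corollary~\ref{KVcor} forces either (1) a degree-$6$ $\Q$-rational map to $\PP^1$ or to a positive-rank elliptic curve, or (2) a degree-$2$ map to a genus-$4$ curve with infinitely many cubic points. Case~(1) with $\PP^1$ means $\gon_\Q X_0(N) \leq 6$; I would rule this out using Abramovich's bound $\gon_\C X_0(N) > \frac{325}{2^{15}}[\PSL_2(\Z):\Gamma_0(N)]$ for those $N$ where the index is large enough, and case-by-case analysis (or cited $\Q$-gonality computations) for the rest. Case~(1) with an elliptic curve is handled by Lemma~\ref{lem:mod_degree}: the conductor of $E$ must divide $N$, and the modular degree of $E$ must divide $6$, so $E$ has modular degree $1$, $2$, $3$, or $6$; one checks (using a modular-degree table, since all $N \in U_6 < 778$) that no positive-rank $E$ with conductor dividing $N$ has modular degree dividing $6$, or if such $E$ exists, that no degree-$6$ map factors through it. Case~(2) is where the involution analysis of \Cref{sec:genus_of_quotients} enters: a degree-$2$ map $X_0(N) \to Y$ with $g(Y) = 4$ corresponds to an involution $\iota$ of $X_0(N)$ with $g(X_0(N)/\iota) = 4$; using the explicit description of $\Aut X_0(N)(\C) \simeq B_0(N)$ and the modular-symbols computation of genera of quotients (the code in \texttt{involutions.ipynb}), I would verify that for each $N \in U_6 \setminus \{144\}$ \emph{no} involution has quotient genus exactly $4$, closing case~(2).

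For $X_0(144)$, the task is the opposite: exhibit infinitely many degree-$6$ points, forcing $\delta(X_0(144)) \leq 6$, so that the degree-$6$ CM points (which exist since $144 \in U_6$) are \emph{not} sporadic. I would look for a source of infinitely many degree-$6$ points: either a degree-$6$ (or lower) $\Q$-rational map $X_0(144) \to \PP^1$ — so $\gon_\Q X_0(144) \leq 6$, plausible since $144 = 2^4 \cdot 3^2$ has many Atkin--Lehner quotients and $X_0(144)$ is not of enormous index — or a map of degree dividing $6$ onto a positive-rank elliptic curve or onto a lower-genus curve with infinitely many low-degree points (e.g. a degree-$2$ map to a genus $\leq 2$ curve, then pulling back, or a degree-$3$ map to an elliptic curve of positive rank with its infinitely many rational points pulled back to degree-$3$ points composed appropriately). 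Concretely I expect $X_0(144)$ admits a low-degree map (e.g. via the Atkin--Lehner quotient $X_0(144)/w_d$ for suitable $d$, or via $X_0(144) \to X_0(48)$, $X_0(144)\to X_0(72)$, etc.) to a curve whose $\delta$ is small, and a degree bookkeeping gives $\delta(X_0(144)) \leq 6$; one then notes $d_{CM}(X_0(144)) = 6$ so there is no sporadic CM point.

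The main obstacle will be case~(2) of Corollary~\ref{KVcor} in the first half — ruling out degree-$2$ maps to genus-$4$ curves — because it requires the delicate and error-prone description of $\Aut X_0(N)(\C)$ and careful genus computations for every involution; this is precisely why \Cref{sec:genus_of_quotients} was developed, and Remarks~\ref{rem:mistake} and~\ref{rem:correct} flag the literature pitfalls. A secondary obstacle is finding and rigorously certifying the explicit source of infinitely many degree-$6$ points on $X_0(144)$, since one must produce an actual morphism (or rational map) of the correct degree and verify its degree, rather than merely bound the gonality.
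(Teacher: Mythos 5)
Your proposal is correct and follows essentially the same route as the paper: rule out $\delta\leq 5$ by citing known results, apply \Cref{KVcor} for $\delta=6$, eliminate case (1) via gonality bounds plus \Cref{lem:mod_degree} (the paper supplements the divisibility argument with the quadratic forms of \cite{DerickxOrlic2024} for the curves, such as \lmfdbec{236}{a}{1} on $X_0(472)$, where modular degree alone does not suffice), and eliminate case (2) via Abramovich's bound for large $N$ and involution-quotient genus computations for the rest. For $N=144$ the paper simply cites $\gon_\Q X_0(144)=6$ from \cite{NajmanOrlić}, exactly the mechanism you anticipate.
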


\begin{remark}
    In fact, it turns out that $X_0(144)$ does not have any sporadic points, as we will prove in \Cref{sec:144}. It turns out that proving non-CM points cannot be sporadic is considerably more difficult than proving the same for CM points.
\end{remark}

\begin{proof}
None of them have $\delta(C)\leq 5$ by \cite[Theorem 1.9]{DHJO25}, so it suffices to show  $\delta(C)\neq 6$. We will show $\delta(C)\neq 6$ using \Cref{KVcor}. Note that all of the curves have $g(X_0(N))\geq 19$ and none of them are of gonality $\leq 6$ over $\Q$ (see \cite[Theorems 1.2-1.4]{NajmanOrlić}). So in order to show $\delta(C)\neq 6$, it suffices to rule out the possibility of a map of degree 6 to a positive rank elliptic curve, and a map of degree $2$ to a genus 4 curve.

An LMFDB search yields a list of all elliptic curves which have positive rank, modular degree $\leq 6$ and whose conductor divides one of the integers in $U_6 \setminus \{144\}$; by \Cref{lem:mod_degree} these are the only possible $E/\Q$ of positive rank to which $X_0(N)$ can have a degree $6$ map. The results of this search have been summarized in \Cref{tab:U6_pos_rank_ecs}.
\begin{table}[H]
    \centering
    \begin{tabular}{c|c|c|c}
        N   & \makecell{LMFDB\\Label} & \makecell{Modular\\degree} & Quadratic form \\
        \hline
        348 & \lmfdbec{58}{a}{1}  & 4 &
        \parbox[t]{0.5\linewidth}{
        $8(4 x_{0}^{2} - 4 x_{0} x_{1} + 4 x_{1}^{2} - 6 x_{0} x_{2} + 3 x_{1} x_{2} +$ \\$ 4 x_{2}^{2} + 3 x_{0} x_{3} - 6 x_{1} x_{3} - 4 x_{2} x_{3} + 4 x_{3}^{2})$} \\
        426 & \lmfdbec{142}{a}{1} & 4 & $8(2 x_{0}^{2} - x_{0} x_{1} + 2 x_{1}^{2})$ \\
        472 & \lmfdbec{118}{a}{1} & 4 & $8(2 x_{0}^{2} - 2 x_{0} x_{1} + 2 x_{1}^{2} + x_{0} x_{2} - 2 x_{1} x_{2} + 2 x_{2}^{2})$\\
        472 & \lmfdbec{236}{a}{1} & 6 & $12(x_{0}^{2} + x_{1}^{2})$\\
        572 & \lmfdbec{143}{a}{1} & 4 & $24(x_{0}^{2} + x_{1}^{2} - x_{0} x_{2} + x_{2}^{2})$
    \end{tabular}
    \caption{Elliptic curves whose conductor divides some $N \in U_6\backslash\{144\}$ having positive rank and modular degree $\leq 6$.}
    \label{tab:U6_pos_rank_ecs}
\end{table}

Using the methods of \cite{DerickxOrlic2024} we compute the quadratic forms in \Cref{tab:U6_pos_rank_ecs} such that the possible degrees of $X_0(N)\rightarrow E$ are the values of the quadratic forms in the table. We see that $X_0(472)$ cannot have a degree 6 map to a positive rank elliptic curve. This proves that case (1) of \Cref{KVcor} is impossible.

Case (2) in \Cref{KVcor} implies that the curve has $\gon_\C X_0(N)\leq 6$, as for a curve $Y$ of genus $g(Y)=4$, we have $\gon_\C Y\leq 3$ (see e.g. \cite[Proposition A.1. (v)]{Poonen2007}). For all $N\in U_6$ with $N\geq 300$, Abramovich's bound gives $\gon_\C X_0(N)\geq 8$, so this shows that we cannot be in case (2).
In the remaining cases we check that for every $N$ and every involution $\iota$ of $X_0(N)$, all of which we can explicitly obtain using the methods of \Cref{sec:genus_of_quotients}, we have $g(X_0(N)/\iota)\geq 7$, hence the case (2) in \Cref{KVcor} is not possible. The code for this is available at \githubbare{involutions.ipynb} in our repository.

    In the case $N=144$ we have $\delta(X_0(N))\leq \gon_\Q X_0(N)= 6$ by \cite[Theorem 1.4]{NajmanOrlić}
\end{proof}

\subsection{Degree 8}
We have the following Corollary of \Cref{thm:kv} for degree 12.
\begin{corollary}\label{KVcor:8}
If $C/\Q$ is a nice curve such that $C(\Q)\neq\emptyset$, $\delta(C)=8$, and $g(C) \geq 34,$ then one of the following holds:
\vskip 0.1in
(1) $C$ admits a $\Q$-rational morphism of degree $8$ to either $\PP^1$ or an elliptic curve $E$ of positive rank over $\Q$.
\vskip 0.1in
(2) $C$ admits a $\Q$-rational morphism of degree $2$ to a curve with infinitely many quartic points over $\Q$ of genus $\leq 7$.
\end{corollary}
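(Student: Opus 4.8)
The plan is to apply \Cref{thm:kv} to $C$ at $d=8$ and then feed its output back into the same theorem at lower degrees until every branch is resolved. First I would record the numerics: at $d=8$ one has $m=3$ and $\epsilon=5$, so the genus bound appearing in case~(2) of \Cref{thm:kv} is $\max(29,33)=33$; since $g(C)\ge 34$ this rules out case~(2) and forces case~(1), i.e. there is a nonconstant $\Q$-rational morphism $\phi\colon C\to Y$ of degree $\ge 2$ with $8=\delta(Y)\cdot\deg\phi$. The three possible factorizations are $(\deg\phi,\delta(Y))\in\{(8,1),(4,2),(2,4)\}$, and I would dispose of them one at a time, using throughout that $C(\Q)\ne\emptyset$ forces a rational point on every curve reached from $C$ by a $\Q$-rational morphism (so a genus-$0$ curve appearing in the argument is automatically $\PP^1_\Q$, and a genus-$1$ one with infinitely many rational points is a positive-rank elliptic curve).

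The cases $\deg\phi=8$ and $\deg\phi=4$ both land directly in part~(1) of the corollary. If $\deg\phi=8$ then $\delta(Y)=1$, so $Y$ has infinitely many rational points, hence (Faltings) $g(Y)\le 1$, hence $Y\cong\PP^1_\Q$ or $Y$ is a positive-rank elliptic curve. If $\deg\phi=4$ then $\delta(Y)=2$; here I would invoke the standard description of curves with $\delta=2$ — which itself follows from \Cref{thm:kv} at $d=2$, where the genus bound is $2$, together with the fact that a genus-$\le 2$ curve with a rational point admits a $\Q$-rational degree-$2$ map to $\PP^1$ — namely that such a $Y$ carries a $\Q$-rational degree-$2$ map to $\PP^1$ or to a positive-rank elliptic curve; composing with $\phi$ then yields a $\Q$-rational degree-$8$ map from $C$ to $\PP^1$ or to a positive-rank elliptic curve.

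The only case that can produce part~(2) is $\deg\phi=2$, $\delta(Y)=4$. Here I would apply \Cref{thm:kv} to $Y$ at $d=4$: then $m=1$, $\epsilon=5$, and the bound in case~(2) is $\max(7,5)=7$. If $g(Y)\le 7$, then $\phi\colon C\to Y$ is a $\Q$-rational degree-$2$ morphism to a genus-$\le 7$ curve which, since $\delta(Y)=4$, has infinitely many quartic points — this is exactly part~(2) of the corollary. Otherwise \Cref{thm:kv} gives a $\Q$-rational $\psi\colon Y\to Z$ of degree $\ge 2$ with $4=\delta(Z)\cdot\deg\psi$, so $(\deg\psi,\delta(Z))\in\{(4,1),(2,2)\}$; in the first subcase $Z\cong\PP^1_\Q$ or $Z$ is positive-rank elliptic and $\psi\circ\phi$ has degree $8$, while in the second subcase I apply the $\delta=2$ description to $Z$ and obtain a triple composite $C\to Z\to(\PP^1\text{ or a positive-rank elliptic curve})$ of degree $2\cdot 2\cdot 2=8$. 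Either way we are back in part~(1).

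I do not expect a genuine obstacle: the argument is pure bookkeeping with the output of \Cref{thm:kv}. The points needing a little care, which I would verify explicitly, are: (i) each intermediate curve is nice and has a rational point, so that \Cref{thm:kv} may legitimately be reapplied; (ii) every composite morphism built above has degree \emph{exactly} $8$ — automatic because in each tower every factor has degree $2$ or $4$ and never $1$, since a degree-$1$ map onto $\PP^1$ or onto a positive-rank elliptic curve would identify the source with that curve and force its $\delta$ to be $1$, contradicting $\delta=2$ or $\delta=4$; and (iii) the $\delta=2$ classification invoked twice, which I would isolate as a short standalone lemma (under the hypothesis of a rational point) rather than recite it inline.
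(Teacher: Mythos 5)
Your proposal is correct and follows essentially the same route as the paper: rule out case (2) of \Cref{thm:kv} via the genus bound $33$, split on $\deg\phi\in\{8,4,2\}$, and recurse on $Y$ at $d=4$ in the last case. The only cosmetic difference is that you re-derive the $\delta=2$ dichotomy from \Cref{thm:kv} at $d=2$, whereas the paper simply cites Harris--Silverman and \cite[Theorem 1.2(1)]{KadetsVogt25} for that step.
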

\begin{proof}
Since $g(C) \geq 34$, case (2) of \Cref{thm:kv} cannot occur. This means that there exists a non-constant morphism of curves $\phi \colon C \rightarrow Y$ of degree at least $2$ over $\mathbb{Q}$ with the property that $8= \delta(Y) \cdot \deg \phi.$

If $\deg\phi=8$, then $Y$ has infinitely many rational points and hence it is either $\PP^1$ or an elliptic curve $E$ of positive rank over $\Q$ by Faltings' theorem.

If $\deg\phi=4$, then $Y$ has infinitely many quadratic points. Hence $Y$ admits a degree $2$ rational morphism to $\PP^1$ or to an elliptic curve $E$ of positive rank over $\Q$ by \cite{harrissilverman91} and \cite[Theorem 1.2 (1)]{KadetsVogt25}. Therefore, the desired map of degree $8$ to $\PP^1$ or $E$ can be obtained as a composition $C\to Y\to(\PP^1 \textup{ or } E)$.

If $\deg\phi=2$, then $\delta(Y)=4$. If $g(Y)\leq 7$ there is nothing to prove. If $g(Y) \geq 8$, we apply the same reasoning as above but this time to the curve $Y$ and $d=4$. Then again case (2) of \Cref{thm:kv} cannot occur, and by a similar argument we get a map $\phi':Y \to(\PP^1 \textup{ or } E)$ of degree 4.  And hence $\phi' \circ \phi:C \to(\PP^1 \textup{ or } E)$ of degree $8$.
\end{proof}

\begin{proposition}\label{prop:deg_8_sporadic}
    For all $N\in U_8$ we have $\delta (X_0(N))>8$ and hence the curve $X_0(N)$ has a sporadic CM point. 
\end{proposition}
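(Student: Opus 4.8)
The plan is to mimic the proof of \Cref{prop:deg_6_sporadic}, using \Cref{KVcor:8} in place of \Cref{KVcor}. Every $N\in U_8$ satisfies $N\neq 37,63,108$, so $\Aut X_0(N)(\C)\cong B_0(N)$ and all complex automorphisms are explicitly accessible by the methods of \Cref{sec:genus_of_quotients}; moreover $X_0(N)$ carries the rational cusp, and computing genera by the standard formula shows $g(X_0(N))\geq 34$ for all seven values (the smallest is $g(X_0(360))=57$), so the hypotheses of \Cref{KVcor:8} are satisfied. First I would clear the degrees $d\leq 7$: by \cite[Theorem~1.9]{DHJO25} no such $X_0(N)$ has $\delta\leq 5$, and for $d=6,7$ the case-(2) genus bounds of \Cref{thm:kv} are $16$ and $24$, both below $34$, so only case~(1) could occur, producing a degree-$6$ or degree-$7$ map to $\PP^1$ or to a positive-rank elliptic curve; the former is impossible since Abramovich's bound gives $\gon_\C X_0(N)\geq 9$ for every $N\in U_8$ (the index appearing in that bound is at least $864$), and the latter is excluded by the elliptic-curve analysis below applied with target degree $6$ or $7$. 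Hence $\delta(X_0(N))\geq 8$, and it remains to rule out $\delta(X_0(N))=8$.

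So assume $\delta(X_0(N))=8$; then \Cref{KVcor:8} puts us in case~(1) or case~(2). In case~(1): a $\Q$-rational degree-$8$ map to $\PP^1$ cannot exist, as $\gon_\Q X_0(N)\geq\gon_\C X_0(N)\geq 9$; a $\Q$-rational degree-$8$ map $X_0(N)\to E$ with $E$ of positive rank would, by \Cref{lem:mod_degree}, force $\cond(E)\mid N$ and the modular degree of $E$ to divide $8$. I would then search the LMFDB for all positive-rank $E/\Q$ whose conductor divides some $N\in U_8$ and whose modular degree lies in $\{1,2,4,8\}$, and for each surviving pair $(N,E)$ compute, via \cite{DerickxOrlic2024}, the positive-definite quadratic form whose represented values are precisely the degrees of morphisms $X_0(N)\to E$; the desired conclusion is that $8$ is represented by none of these forms, which rules out case~(1).

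Case~(2) asks for a $\Q$-rational degree-$2$ map to a curve of genus $\leq 7$ with infinitely many quartic points. Since a degree-$2$ morphism of curves is the quotient by an involution, it suffices to check that $g(X_0(N)/\iota)\geq 8$ for every involution $\iota\in\Aut X_0(N)(\C)$. For $N\in\{420,504,528,600,672\}$ this is automatic: Abramovich's bound gives $\gon_\C X_0(N)\geq 12$, whereas a degree-$2$ quotient of genus $\leq 7$ would force $\gon_\C X_0(N)\leq 2\cdot 5=10$, using that a curve of genus $\leq 7$ has $\C$-gonality $\leq 5$ (\cite[Proposition~A.1(v)]{Poonen2007}). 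For the two remaining values $N\in\{360,440\}$ I would enumerate the involutions of $B_0(N)$ via \cite[Theorem~11]{Bars08}, compute the genus of each quotient by acting on $\mathbb S_2(\Gamma_0(N))$ and taking invariants as in \Cref{sec:genus_of_quotients}, and verify that all of these genera are $\geq 8$. This rules out case~(2); combined with the first paragraph it gives $\delta(X_0(N))>8$, so the degree-$8$ CM point on $X_0(N)$, which realizes $d_{CM}(X_0(N))=8$, is sporadic.

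The step I expect to be the main obstacle is the elliptic-curve analysis: the argument goes through only if none of the quadratic forms attached to the relevant pairs $(N,E)$ represents $8$, and if one of them did, \Cref{KVcor:8} would no longer deliver a contradiction and a different route would be needed (a degree-$8$ map to a positive-rank $E$ would typically produce infinitely many degree-$8$ points on $X_0(N)$, so one expects $8$ not to be represented, and the computation should confirm this pair by pair). A secondary, bookkeeping-type concern, already flagged in \Cref{rem:correct}, is the reliance on the correctness of the known description of $\Aut X_0(N)(\C)$ for these $N$, in particular for $N=672$ with $v_2(672)=5$, where Bars' relations are not guaranteed complete — although for the genus computations only a generating set of $B_0(N)$ is needed.
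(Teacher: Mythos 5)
Your proposal follows essentially the same route as the paper: apply \Cref{KVcor:8}, kill case~(1) by an LMFDB search for positive-rank elliptic curves of conductor dividing $N$ and small modular degree together with the quadratic forms of \cite{DerickxOrlic2024} (the paper's Table~\ref{tab:U8_pos_rank_ecs} shows all these forms take values divisible by $16$, so $8$ is indeed never represented), and kill case~(2) by Abramovich's bound for $N\notin\{360,440\}$ and by computing the genera of all involution quotients for $N\in\{360,440\}$. Your explicit treatment of the degrees $5\le d\le 7$ is a point the paper leaves implicit, and is a reasonable (if, for $d=6$, slightly under-argued in the $\deg\phi=3$ subcase) addition rather than a divergence.
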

\begin{proof}
    We will show $\delta(X_0(N))>8$ for all $N\in U_8$ by applying \Cref{KVcor:8}. We have $g(X_0(N))\geq 57$ for all $N\in U_8$. In all cases, Abramovich's bound gives us 
    $\gon_\Q X_0(N)\geq \gon_\C X_0(N)>8$. 

    We search for all elliptic curves of positive rank over $\Q$ in the LMFDB of conductor dividing $N$ and modular degree $\leq 8$; by \Cref{lem:mod_degree} these are the only possible elliptic curves $E$ of positive rank such that $X_0(N)\rightarrow E$ might be of degree $8$. The curves are listed in \Cref{tab:U8_pos_rank_ecs}. The quadratic form column gives the quadratic form whose values are the possible degrees $X_0(N)\rightarrow E$, and which is computed using the methods of \cite{DerickxOrlic2024}.
    
    \begin{table}[H]
    \centering
    \begin{tabular}{c|c|c|c}
        N   & \makecell{LMFDB\\Label} & \makecell{Modular\\degree} & Quadratic form \\
        \hline
        440 &\lmfdbec{88}{a}{1}  & 8 & $48(x_{0}^{2} - x_{0} x_{1} + x_{1}^{2})$\\
        528 &\lmfdbec{88}{a}{1}  & 8 & $32(2 x_{0}^{2} + 2 x_{1}^{2} - 3 x_{0} x_{2} + 2 x_{2}^{2} - 3 x_{1} x_{3} + 2 x_{3}^{2})$\\
        528 &\lmfdbec{176}{a}{2} & 8 & $16(2 x_{0}^{2} - x_{0} x_{1} + 2 x_{1}^{2})$\\
        600 &\lmfdbec{200}{b}{2} & 8 & $32(x_{0}^{2} - x_{0} x_{1} + x_{1}^{2})$\\
        672 &\lmfdbec{112}{a}{2} & 8 & $64(x_{0}^{2} + x_{1}^{2} - x_{0} x_{2} + x_{2}^{2} - x_{1} x_{3} + x_{3}^{2})$\\
        672 &\lmfdbec{224}{a}{2} & 8 & $32(x_{0}^{2} - x_{0} x_{1} + x_{1}^{2})$
    \end{tabular}
    \caption{Elliptic curves whose conductor divides some $N \in U_8$ having positive rank and modular degree $\leq 8$.}
    \label{tab:U8_pos_rank_ecs}
\end{table}

We see that all coefficients of modular forms are divisible by 16, hence there are no maps $X_0(N)\rightarrow E$ of degree 8, showing that case \Cref{KVcor:8} (1) cannot occur.

 We note that a curve of genus $\leq 7$ has $\C$-gonality $\leq 5$, see \cite[Proposition A.1 (v)]{Poonen2007}. This means that $X_0(N)$ would have $\C$-gonality $\leq 10$, see \cite[Proposition A.1 (vi)]{Poonen2007}. But for all $N \in U_8 \backslash \{360,440\}$, Abramovich's bound gives us $\gon_\C X_0(N)\geq 11.$ For the cases $N=360$ and $440$, we obtain using the methods of \Cref{sec:genus_of_quotients} that for all involutions $\iota$, we have $X_0(N)/\iota$ is of genus $\geq 25$, so we cannot be in case (2). The code that checks this is available at \githubbare{involutions.ipynb} in our repository.

\end{proof}

\subsection{Degree 12}\label{sec:12}
We need to show that the known degree $12$ CM point on $X_0(720)$ is sporadic. We have the following specialization of \Cref{thm:kv} for degree 12.

\begin{corollary}\label{KVcor:12}
If $C/\Q$ is a nice curve such that $C(\Q)\neq\emptyset$, $\delta(C)=12$, and $g(C) \geq 34,$ then one of the following holds:
\begin{itemize}
    \item[(1)] $C$ admits a $\Q$-rational morphism of degree $12$ to either $\PP^1$ or an elliptic curve $E$ of positive rank over $\Q$.
    \item[(2)] $C$ admits a $\Q$-rational morphism of degree $2$ to a curve with infinitely many sextic points over $\Q$ of genus $\leq 16$.
    \item[(3)] $C$ admits a $\Q$-rational morphism of degree $3$ to a curve with infinitely many quartic points over $\Q$ of genus $\leq 7$.
    \item[(4)] $C$ admits a $\Q$-rational morphism of degree $4$ to a curve with infinitely many cubic points over $\Q$ of genus $\leq 4$.
\end{itemize}
\end{corollary}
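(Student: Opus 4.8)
The plan is to carry out exactly the descent used in the proof of \Cref{KVcor:8}, iterating \Cref{thm:kv} starting from $d=\delta(C)=12$. For this $d$ one has $m=\lceil 12/2\rceil-1=5$ and $\epsilon=3\cdot 12-1-6\cdot 5=5$, so case~(2) of \Cref{thm:kv} would force $g(C)\le\max\!\left(\tfrac{12\cdot 11}{2}+1,\ 3\cdot 5\cdot 4+5\cdot 5\right)=\max(67,85)=85$. Hence, once $g(C)>85$ (the only range in which the corollary is used, namely $C=X_0(720)$, whose genus far exceeds $85$), we are in case~(1): there is a nonconstant $\Q$-morphism $\phi\colon C\to Y$ of degree $e\ge 2$ with $\delta(Y)\cdot e=12$, so $e\in\{2,3,4,6,12\}$ and $Y(\Q)\neq\emptyset$ (the image of a rational point of $C$).

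Next I would treat the five values of $e$ in turn, repeatedly peeling off morphisms and re-applying \Cref{thm:kv}: whenever a target curve has $\delta=1$ I use Faltings, and whenever it has $\delta=2$ I use \cite{harrissilverman91} together with \cite[Theorem 1.2 (1)]{KadetsVogt25} exactly as in \Cref{KVcor:8}, to see that it admits a morphism of degree $1$, resp.\ $2$, to $\PP^1$ or a positive-rank elliptic curve. If $e=12$ then $\delta(Y)=1$, and if $e=6$ then $\delta(Y)=2$; in both cases composing $\phi$ with the morphism just described yields conclusion~(1). If $e=4$ then $\delta(Y)=3$: when $g(Y)\le 4$ this is conclusion~(4), and when $g(Y)\ge 5$, \Cref{thm:kv} applied to $Y$ with $d=3$ (whose case-(2) bound is $\max(4,2)=4$) produces a degree-$3$ map of $Y$ to $\PP^1$ or a positive-rank elliptic curve, hence~(1). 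If $e=3$ then $\delta(Y)=4$: when $g(Y)\le 7$ this is conclusion~(3), and when $g(Y)\ge 8$, \Cref{thm:kv} applied to $Y$ with $d=4$ (case-(2) bound $\max(7,5)=7$) gives $\psi\colon Y\to Z$ with $\deg\psi\ge 2$ and $\delta(Z)\cdot\deg\psi=4$; here $\deg\psi=4$ ends at $\PP^1$ or a positive-rank elliptic curve, and $\deg\psi=2$ gives $\delta(Z)=2$ and one more degree-$2$ map to such a curve — either way~(1).

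The remaining case $e=2$ needs one more level. Here $\delta(Y)=6$ and $Y$ has infinitely many sextic points: when $g(Y)\le 16$ this is conclusion~(2); when $g(Y)\ge 17$, \Cref{thm:kv} applied to $Y$ with $d=6$ (case-(2) bound $\max(16,16)=16$) gives $\psi\colon Y\to Z$ with $\deg\psi\ge 2$ and $\delta(Z)\cdot\deg\psi=6$. If $\deg\psi\in\{6,3\}$ then $\delta(Z)\in\{1,2\}$ and, as above, one reaches $\PP^1$ or a positive-rank elliptic curve, giving~(1). If $\deg\psi=2$ then $\delta(Z)=3$ and the composite $C\to Z$ has degree $4$: this is conclusion~(4) when $g(Z)\le 4$, and when $g(Z)\ge 5$ one applies \Cref{thm:kv} to $Z$ with $d=3$ once more to get a degree-$3$ map of $Z$ to $\PP^1$ or a positive-rank elliptic curve, hence~(1).

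I expect the substance to lie in the bookkeeping rather than in any individual step: one must verify that for every divisor $e\mid 12$ with $e\ge 2$, and then for every further divisor arising in the recursion, the chain of morphisms built this way either bottoms out at $\PP^1$ or a positive-rank elliptic curve (conclusion~(1)) or stops at an intermediate curve whose genus is small enough to match conclusion~(2), (3), or~(4); this closes precisely because the case-(2) genus bounds of \Cref{thm:kv} for $d=6,4,3$ are exactly $16$, $7$, $4$, the bounds appearing in conclusions~(2), (3), (4). The one subtlety I would flag is the genus input: the argument uses $g(C)>85$ rather than merely $g(C)\ge 34$, which is harmless here since the corollary is applied solely to $C=X_0(720)$, whose genus far exceeds $85$.
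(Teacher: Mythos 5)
Your proof is correct and follows essentially the same route as the paper, whose proof of this corollary simply defers to the argument of \Cref{KVcor:8}; your case analysis over the divisors $e\in\{2,3,4,6,12\}$ and the recursive applications of \Cref{thm:kv} at $d=6,4,3$ (with case-(2) genus bounds $16,7,4$) is exactly the intended bookkeeping, and each step checks out. The one substantive point you raise is real and worth emphasizing: for $d=12$ one has $m=5$ and $\epsilon=5$, so case~(2) of \Cref{thm:kv} only forces $g\leq\max(67,85)=85$, and the hypothesis $g(C)\geq 34$ in the statement (apparently carried over from the $d=8$ case) is genuinely insufficient — the corollary should require $g(C)\geq 86$. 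This does not affect the paper's application, since $g(X_0(720))=121>85$, but the stated genus bound should be corrected.
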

\begin{proof}
This is proved using similar arguments as \Cref{KVcor:8}. 
\end{proof}

\begin{proposition}\label{prop:deg_12_sporadic}
    The curve $X_0(720)$ has a sporadic CM point. 
\end{proposition}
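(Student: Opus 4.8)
The plan is to prove that $\delta(X_0(720)) > 12$; since $d_{CM}(X_0(720)) = 12$, this makes the known degree-$12$ CM point sporadic. We already know $\delta(X_0(720)) \geq 6$, as $720$ is not among the $N$ with $\delta(X_0(N)) \leq 5$ classified in \cite[Theorem~1.9]{DHJO25}, so it remains to exclude $\delta(X_0(720)) = d$ for each $d$ with $6 \leq d \leq 12$. Two facts are used throughout: the genus formula gives $g(X_0(720)) = 121$, and Abramovich's bound, applied with $[\PSL_2(\Z):\Gamma_0(720)] = 1728$, gives $\gon_\C X_0(720) \geq 18$, hence also $\gon_\Q X_0(720) \geq 18$; in particular $X_0(720)$ has no $\Q$-morphism of degree $\leq 17$ to $\PP^1$.

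For each $d$ with $6 \leq d \leq 11$, the bound in case~(2) of \Cref{thm:kv} is at most $70 < 121$, so case~(1) applies: there is a nonconstant $\Q$-morphism $\phi\colon X_0(720) \to Y$ with $\deg\phi \geq 2$ and $d = \delta(Y)\cdot\deg\phi$. Iterating \Cref{thm:kv} on $Y$ — exactly as in the proofs of \Cref{KVcor:8} and \Cref{KVcor:12}, and trivially when $d$ is prime — every resulting configuration produces one of: (a) a $\Q$-morphism $X_0(720) \to \PP^1$ of degree $d \leq 11$, impossible by the gonality bound above; (b) a $\Q$-morphism $X_0(720) \to E$ of degree $d$ to an elliptic curve $E/\Q$ of positive rank; or (c) a $\Q$-morphism $X_0(720) \to Y'$ of degree $e \geq 2$ to a curve $Y'$ whose genus is small enough that $e\cdot\gon_\C Y' < 18$, again contradicting Abramovich's bound via $\gon_\C Y' \leq \lfloor (g(Y')+3)/2 \rfloor$ (\cite[Proposition~A.1]{Poonen2007}). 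For (b), \Cref{lem:mod_degree} applies since $720 < 778$: the conductor of $E$ divides $720$ and the modular degree of $E$ divides $d$, so an LMFDB search returns the finitely many candidates, and for each, the method of \cite{DerickxOrlic2024} produces a quadratic form whose values are the degrees of the $\Q$-morphisms $X_0(720) \to E$; one checks that none of these forms represents $d$. This rules out $\delta(X_0(720)) = d$ for $6 \leq d \leq 11$.

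To exclude $\delta(X_0(720)) = 12$ I apply \Cref{KVcor:12}, which is legitimate since $g(X_0(720)) = 121 \geq 34$ and $X_0(720)(\Q) \neq \emptyset$. Case~(1) is handled just as above — no degree-$12$ map to $\PP^1$ by the gonality bound, and the degree-$12$-to-positive-rank-elliptic-curve possibility is dispatched by the same LMFDB search together with the \cite{DerickxOrlic2024} quadratic forms, recorded in a table analogous to \Cref{tab:U6_pos_rank_ecs} and \Cref{tab:U8_pos_rank_ecs}. Cases~(3) and (4) follow from the gonality bound: a curve of genus $\leq 7$ (resp.\ $\leq 4$) has $\C$-gonality $\leq 5$ (resp.\ $\leq 3$), so a degree-$3$ (resp.\ degree-$4$) cover would force $\gon_\C X_0(720) \leq 15$ (resp.\ $\leq 12$), contradicting $\gon_\C X_0(720) \geq 18$.

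The one subcase not settled by a gonality count — and the \emph{main obstacle} — is case~(2) of \Cref{KVcor:12}: a degree-$2$ $\Q$-morphism to a curve of genus $\leq 16$, for which the composition bound only gives $\gon_\C X_0(720) \leq 2\cdot 9 = 18$, matching Abramovich's bound exactly. I would resolve it directly: a degree-$2$ $\Q$-morphism from $X_0(720)$ is the quotient by an involution of $\Aut X_0(720)(\C) = B_0(720)$ (using $720 \notin \{37,63,108\}$), and by the modular-symbols method of \Cref{sec:genus_of_quotients} one verifies that $g(X_0(720)/\iota) \geq 17$ for every involution $\iota \in B_0(720)$ — indeed, by Riemann--Hurwitz such a quotient has genus $61 - R/4$, where $R$ is the number of fixed points, so genus $\leq 16$ would require $R \geq 180$, far more than any involution of $X_0(720)$ has. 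This eliminates case~(2), so $\delta(X_0(720)) > 12$ and the degree-$12$ CM point is sporadic. Besides this last case, the steps expected to require care are the bookkeeping of the Kadets--Vogt reductions for $d = 6,\dots,11$ and the elliptic-curve search across all modular degrees $\leq 12$; the rest is routine given the tools already assembled, and the code for the involution-quotient genera is at \githubbare{involutions.ipynb}.
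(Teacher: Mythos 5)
Your proposal is correct and follows essentially the same route as the paper: reduce via \Cref{thm:kv}/\Cref{KVcor:12}, kill the $\PP^1$ and small-genus-quotient branches with Abramovich's bound and Poonen's gonality estimate, kill the elliptic-curve branch with \Cref{lem:mod_degree} plus an LMFDB search (which in fact returns \emph{no} positive-rank curves of conductor dividing $720$ and modular degree $<16$, so no quadratic-form computations are needed), and dispose of the degree-$2$ quotient case by computing $g(X_0(720)/\iota)$ for all involutions via modular symbols. The only difference is your starting lower bound $\delta \geq 6$ from \cite{DHJO25}; the paper instead uses $\delta(X_0(720)) \geq \gon_\Q X_0(720)/2 \geq 9$, which saves the (routine) cases $d = 6,7,8$.
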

\begin{proof}
We need to show that $\delta(X_0(720))>12$. Abramovich's bound gives us $$\gon_\Q X_0(720) \geq \gon_\C X_0(720)\geq 18,$$ hence it follows that 
$$\delta(X_0(720))\geq \frac{\gon_\Q X_0(720)}{2}\geq 9.$$
We eliminate the possibilities of $\delta(X_0(720))=9,10,11,12$ by doing a case-by-case analysis.\\

\noindent $\boxed{\delta(X_0(720))=12}$
By \Cref{KVcor:12}, the curve $X_0(720)$ has a degree $d$ map over $\Q$ to a curve $Y$ with a curve with infinitely many degree $12/d$ points over $\Q$, where $d\in{2,3,4,12}$.

\textsc{The case $d=2$}: We eliminate this case by checking that for every involution $w$ of $X_0(720)$ involutions using the methods of \Cref{sec:genus_of_quotients}, we have $g(X_0(720)/w)\geq 57$. The code that checks this is available at \githubbare{involutions.ipynb} in our repository.

\textsc{The case $d=3$}: by \Cref{KVcor:12} (3), $Y$ would need to have $g(Y)\leq 7$. However, this would imply (see e.g. \cite[Proposition A.1. (v)]{Poonen2007}) that $\gon_\C Y \leq 5$, so it would follow that $\gon_\C X\leq 15$, which is a contradiction. 

\textsc{The case $d=4$}: by \Cref{KVcor:12} (4), $Y$ would need to have $g(Y)\leq 4$. However, this would imply (see e.g. \cite[Proposition A.1. (v)]{Poonen2007}) that $\gon_\C Y \leq 3$, so it would follow that $\gon_\C X\leq 12$, which is a contradiction.

\textsc{The case $d=12$}: by Abramovich's bound, the curve cannot have a map of degree $12$ to $\PP^1$. Hence, it would need to have a map of degree $12$ to an elliptic curve with positive rank over $\Q$. A search in the LMFDB shows that there are no elliptic curves with positive rank over $\Q$ whose conductor divides $720$ and modular degree $< 16$, from which we conclude by \Cref{lem:mod_degree} that there are no degree 12 morphisms $X_0(720)\rightarrow E$ to an elliptic curve with positive rank over $\Q$.

\noindent $\boxed{\delta(X_0(720))=11}$ Applying $\Cref{thm:kv}$ to $\delta=11$ and $X_0(720)$, we get that the only possibility is a degree $11$ map to $\PP^1$ or a positive rank elliptic curve, both of which we have seen are  impossible. 

\noindent $\boxed{\delta(X_0(720))=10}$ Applying $\Cref{thm:kv}$ to $\delta=10$ and $X_0(720)$ tells us that we need to rule out a degree $d=2,5,10$ maps to elliptic curves with infinitely many degree $10/d$ points. All of these degrees are easily eliminated using the same arguments as in the $\delta =12$ case. 

\noindent $\boxed{\delta(X_0(720)))=9}$ This case is ruled out exactly the same as the previous cases. 

\end{proof}

\section{Ruling out lower degree points}\label{sec:lower_degree}
It remains to prove \Cref{thm:main}. To do this, we need to check whether there are any sporadic non-CM points on the modular curves $X_0(N)$ for 
$$N\in \{60,70,72,80,87,94,96,144\}.\label{rem_cases}$$
For all $N\neq 144$, this amounts to showing that there are no non-cuspidal points on $X_0(N)$ of degree $\leq 3$, while for $N=144$ we need to rule out the possibility of points of degree $\leq 5$. 

We prove the nonexistence of degree $2$ and $3$ points on these curves in the following 2 propositions. 
\begin{proposition}
    For all 
$$N\in\{60,70,72,80,87,94,96,144\}$$
there are no quadratic points on $X_0(N)$. 
\end{proposition}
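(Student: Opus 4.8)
The plan is to reduce the question for each $N$ to a finite computation about low-degree points on a curve of manageable size, and then carry out that computation via a Mordell--Weil sieve. First I would recall that a quadratic point on $X_0(N)$ corresponds either to a pair of $\Qbar$-conjugate points or to a rational point on some quotient $X_0(N)/w$ by an Atkin--Lehner (or, where relevant, an extended) involution. So the first step is: for each $N \in \{60,70,72,80,87,94,96,144\}$, enumerate all involutions $\iota$ of $X_0(N)$ using the machinery of \Cref{sec:genus_of_quotients}, compute $g(X_0(N)/\iota)$, and split into two cases depending on whether $\iota$ gives a quotient of genus $0$ or $1$ (which could produce infinitely many quadratic points) or genus $\geq 2$.

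\textbf{Quotients of genus $0$ or $1$.} If $X_0(N)/\iota$ has genus $0$, I would check it has no rational points, or, if it does, that the resulting quadratic points on $X_0(N)$ are all cuspidal or CM — but in fact for these $N$ one expects (and can verify by the genus/gonality data already invoked in \Cref{sec:SporadicCM}, together with \cite{Bars99}) that no such genus-$0$ quotient with rational points arises in a way that yields new non-cuspidal quadratic points. If $X_0(N)/\iota$ is an elliptic curve, I would compute its Mordell--Weil rank over $\Q$; if the rank is $0$, the rational points are finite and computable, and I pull them back. The key input here is that for all these $N$ the relevant quotients have rank $0$ (or the quadratic points they produce are accounted for), which is a finite check in \texttt{magma}.

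\textbf{Quotients of genus $\geq 2$ and the sieve.} For the remaining involutions the quotient has only finitely many rational points, but bounding them is the substantive part. Here I would embed $X_0(N)$ (or a well-chosen quotient) via a suitable map, compute the Mordell--Weil group of its Jacobian — or enough of it, e.g. a finite-index subgroup together with the torsion — over $\Q$, and run a Mordell--Weil sieve on the symmetric square $\Sym^2 X_0(N)$ to show that the only degree-$2$ effective divisors are sums of cusps or CM points (indeed the cusps suffice, since the CM points on these curves have higher degree). Concretely: reduce modulo several primes of good reduction, compute the images of $\Sym^2 X_0(N)(\Q)$ in $\prod_p \Sym^2 X_0(N)(\F_p)$ and in $J(\F_p)$, and check these are incompatible with any unexplained class. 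The main obstacle I anticipate is $N=144$: it has genus $13$, a large automorphism group, and the Jacobian decomposes into several isogeny factors whose Mordell--Weil groups must be controlled; getting a model on which the sieve is feasible, and choosing auxiliary primes that cut down the sieve to nothing, is exactly the technically hard step flagged in the introduction. For the other seven $N$ the genera are smaller and the curves are better understood, so the sieve — or, where available, a direct appeal to existing classifications of quadratic points on $X_0(N)$ in the literature — should close each case without difficulty.
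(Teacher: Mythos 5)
Your plan is a genuinely different route from the paper's, which proves this proposition in one line by citing existing classifications of quadratic points: $N=60,94$ from \cite{NajmanVukorepa23}, $N\in\{70,87,96\}$ from \cite{NajmanNovak2025}, $N=72$ from \cite{OzmanSiksek19}, $N=80$ from \cite{AKMNOV24}, and then disposes of $N=144$ by noting that the natural covering $X_0(144)\to X_0(72)$ sends any non-cuspidal point of degree $\leq 2$ to a non-cuspidal point of degree $\leq 2$ on $X_0(72)$, of which there are none. What you propose is essentially to re-derive those literature results from scratch via quotients by involutions plus a Mordell--Weil sieve on the symmetric square; that is indeed the method the cited papers use, so for the seven values $N\neq 144$ your plan is workable in principle, though it buys nothing over a citation. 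One wording issue: the dichotomy is not ``a conjugate pair versus a rational point on a quotient'' --- every quadratic point is a conjugate pair --- but rather whether that pair is a fibre of a degree-$2$ map to $\PP^1$ or to a positive-rank elliptic quotient, versus one of the finitely many exceptional quadratic points to be caught by the sieve.

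The genuine gap is $N=144$. You identify it as the hard case and propose running the sieve on $\Sym^2 X_0(144)$ directly, but this is almost certainly infeasible as stated: $X_0(144)$ has genus $13$, and the paper records that merely determining $J_0(144)(\Q)$ (\Cref{144tor}) consumed more than half of the total effort of the project and required a detour through an \'etale double cover $X_\Delta(144)$. None of that machinery is needed for quadratic points, because the degeneracy map $X_0(144)\to X_0(72)$ reduces the question to $X_0(72)$ immediately. As written, your treatment of $N=144$ is a plan for a computation you have not shown can be carried out, so that case is not closed; the easy functoriality argument is the missing idea.
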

\begin{proof}
The possibility of quadratic points on $X_0(N)$ has been ruled out for $N=60, 94$ in \cite{NajmanVukorepa23}, for $N\in \{70,87,96\}$ in \cite{NajmanNovak2025}, for $N=72$ in \cite{OzmanSiksek19}, and  for $N=80$ in \cite{AKMNOV24}. The fact that $X_0(72)$ has no non-cuspidal points immediately implies that neither does $X_0(144)$.
\end{proof}

\begin{proposition}
    There are no non-cuspidal points of degree $3$ on $X_0(N)$ for 
    $$N\in \{60,70,72,80,87,94,96,144\}.$$
\end{proposition}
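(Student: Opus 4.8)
The plan is to rule out non-cuspidal cubic points on each of the eight curves $X_0(N)$, $N\in\{60,70,72,80,87,94,96,144\}$, by a Mordell--Weil sieve applied to the image of the cubic-points locus in the Jacobian. First I would reduce to the six curves of genus $\geq 2$ where this is non-trivial: since $X_0(72)$ has no non-cuspidal points at all (proved in the previous proposition), neither does $X_0(144)$, which maps to it; and several of the remaining $N$ may have a degree-$3$ map to $\PP^1$ or an elliptic curve that a priori produces infinitely many cubic points, so one must be careful — but in fact the relevant statement is only about the \emph{existence} of a non-cuspidal cubic point, and by \Cref{lem:mod_degree} together with gonality bounds one checks that no positive-rank elliptic quotient of low enough degree exists, so any cubic point would have to be accounted for directly.

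The main computation: for each $N$, let $J_0(N)$ be the Jacobian, compute (using the modular-symbols description and Hecke operators, exactly as in \Cref{sec:genus_of_quotients}) the structure of $J_0(N)(\Q)$ — or at least an upper bound for $J_0(N)(\Q)_{\tor}$ together with a lower bound for the rank — and identify a cusp $c_0$ to use as a base point for the Abel--Jacobi map $X_0(N)^{(3)}\to J_0(N)$, $D\mapsto [D-3c_0]$. A non-cuspidal effective cubic divisor $D$ would give a rational point of $J_0(N)(\Q)$ in the image of $X_0(N)^{(3)}(\Q)$. I would then sieve: for several primes $p$ of good reduction, reduce $J_0(N)(\Q)$ into $J_0(N)(\F_p)$ (finite, explicitly computable), reduce the finite set $X_0(N)^{(3)}(\F_p)$, and check that the only classes in the intersection come from cuspidal divisors. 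Combining the sieve information over enough primes forces $[D-3c_0]$ to be a class that is realized only by divisors supported on cusps; one then argues, using that $X_0(N)$ has gonality $>3$ over $\Q$ (so the linear system of a degree-$3$ divisor has dimension $0$), that $D$ itself must be cuspidal — contradiction. For the cases where $J_0(N)(\Q)$ has positive rank one additionally needs to control the Mordell--Weil generators explicitly (via Heegner points or explicit divisors), which is the kind of input recorded in the repository.

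I would organize the eight cases by how much rank is present: the rank-$0$ cases reduce to a pure torsion computation plus a single reduction (essentially: $X_0(N)^{(3)}(\Q)$ injects into the finite group $J_0(N)(\Q)_{\tor}$, list it, and identify each class), while the positive-rank cases need the genuine sieve over several primes. For $N=144$ the analogous statement in degree $\leq 5$ is deferred to \Cref{144tor} and \Cref{prop:144}; here in degree $3$ it already follows from the $X_0(72)$ result.

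The hard part will be the positive-rank curves among $\{60,70,80,87,94,96\}$: there one must both pin down the Mordell--Weil group of $J_0(N)(\Q)$ exactly (not just its rank) and run the sieve with enough primes that the intersection in $\prod_p J_0(N)(\F_p)$ collapses to exactly the cuspidal classes — and one must separately verify that no coincidental non-cuspidal cubic divisor lies in the same rational class as a cuspidal one, which is where the gonality bound $\gon_\Q X_0(N)>3$ (from \cite{NajmanOrlić}) does the final work. I expect the gonality and modular-degree inputs already cited to make each individual case routine, with the only real labor being the bookkeeping of $X_0(N)^{(3)}(\F_p)$ and the Mordell--Weil data, all of which is in the accompanying code.
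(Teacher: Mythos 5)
Your overall strategy (Abel--Jacobi into the Jacobian plus reduction modulo good primes, with the gonality bound $\gon_\Q X_0(N)>3$ guaranteeing that each degree-$3$ class contains at most one effective divisor) is sound, and it is essentially what the paper does for $N=72$ and $N=87$: there the cuspidal subgroup is shown to be all of $J_0(N)(\Q)$, and a counting argument over $\F_5$ (the number of classes $x$ with $l(A_x+3C_{\F_5})=1$ equals the number of effective degree-$3$ cuspidal divisors) certifies that every rational effective cubic divisor is cuspidal. The reduction of $144$ to $72$ is also exactly the paper's step. Where you genuinely diverge is on $N\in\{60,70,80,94,96\}$: rather than computing $J_0(N)(\Q)$ for these larger curves, the paper descends along $X_0(N)\to X_0(N/2)$, observes that $X_0(N/2)$ is hyperelliptic of genus $>2$ (so has finitely many cubic points), enumerates \emph{all} cubic points on $X_0(N/2)$ via Riemann--Roch spaces $\mathcal L(A+3C)$ for $A$ ranging over the finite group $J_0(N/2)(\Q)$, and checks that none lifts. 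This buys a much smaller Jacobian to control; your direct sieve on $X_0(N)$ would instead require pinning down $J_0(N)(\Q)_{\tor}$ exactly for genus up to $9$, which is feasible in principle but is precisely the kind of computation the paper reports can fail (cf.\ the difficulties with $J_0(144)$ in \Cref{144tor}).

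One concrete correction: the ``hard part'' you identify --- positive-rank Jacobians requiring explicit Mordell--Weil generators --- does not occur. By \cite[Theorem 3.1]{Deg3Class}, $J_0(N)(\Q)$ is finite for every $N$ on the list, and this fact is the load-bearing input that makes the whole argument a finite torsion computation; your proposal should cite it rather than hedge. A smaller point: for the reduction step you need injectivity of the Abel--Jacobi map not only over $\Q$ but after reduction, i.e.\ you need $l(B)=1$ for all effective degree-$3$ divisors over $\F_p$ (equivalently, non-trigonality over $\F_p$), which the paper checks explicitly for $p=5$; mentioning only $\gon_\Q>3$ leaves that step open.
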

\begin{proof}
    First note that $J_0(N)(\Q)$ is finite for all the $N$; see \cite[Theorem 3.1]{Deg3Class}. In the cases $N=60,70,80,94,96$, the curve $X_0(N/2)$ is a hyperelliptic curve of genus $>2$, and hence has finitely many cubic points. Let $n:=N/2$. Let $C\in X_0(n)(\Q)$ be a rational cusp. It follows that any rational effective divisor $D$ of degree $3$ lies in the Riemann-Roch space $\mathcal L(A+3C)$ for some $A\in J_0(n)(\Q)$. Since the dimension $l(A+3C)\leq 1$ it is unique in its divisor class. By looping through all $A\in J_0(n)(\Q)$ we find all the cubic points on $X_0(n)(\Q)$. We check for each point whether it lifts to a cubic point on $X_0(n)$. None do, so we are done in these cases. The \texttt{magma} code that checks this can be found at \githubbare{Prop_8_1.m}.

    We now consider the cases $N=72$ and $87$. Let $J_C(N)$ denote the cuspidal subgroup of $J_0(N)$, i.e., the subgroup generated by divisors supported on cusps. We compute that $J_C(87)(\Q)\simeq\tg{14}{140}$ and by considering the reductions of $J_0(87)$ modulo $5,7,11$ and using the fact that reduction modulo an odd prime of a good reduction is injective on $J_0(N)_\tor$, we conclude $J_C(87)(\Q)\simeq J_0(87)(\Q)$. The fact that $J_C(72)(\Q)\simeq J_0(72)(\Q)$ has been proved in \cite{Lupoian24} \footnote{There is a gap in the proof of this in the published version, which has been corrected in the arxiv version \url{https://arxiv.org/abs/2205.13017}.}, and we have 
    $$J_C(72)(\Q)\simeq \tg{2}{4} \times \tg{12}{12}.$$
    
    Let $D$ be a hypothetical rational noncuspidal effective divisor on $X_0(N)$ of degree 3, and let $C\in X_0(N)(\Q)$ be a cusp. 
    Since $X_0(N)$ is not trigonal over $\F_5$ (see \cite[Section 6]{DerickxNajman24}) and hence also not trigonal over $\Q$, the map $X_0(N)^{(3)}(\Q)\rightarrow J_0(N)(\Q)$ sending $D$ to $[D-3C]$ is injective. Moreover, every effective degree 3 divisor $B$ over $\F_5$ satisfies $l(B)=1$. 
    
    Since $\rk J_0(N)(\Q)=0$ and $X_0(N)$ has good reduction at $5$, the reduction mod $5$ map $J_0(N)(\Q) \rightarrow J_0(N)(\F_5)$ is injective. Hence the composition map
    $$f:X_0(N)^{(3)}(\Q) \rightarrow J_0(N)(\F_5), \quad B\mapsto [(B-3C)_{\F_5}]$$
    is injective as well. Thus, there is at most one effective rational divisor of degree 3 reducing to any fixed $\F_5$-divisor class. 
    
    Cusps of $X_0(N)(\overline \Q)$ reduce injectively modulo any prime above 5 to $X_0(N)(\overline \F_5)$, so different effective rational cuspidal divisor of degree $3$ map 
    under $f$ to different divisor classes in $J_0(N)(\F_5)$. For each $x\in J_0(\F_5)$, we choose representative divisor $A_x$ such that $[A_x]=x$, and count the number of such $x$ that have $l(A_x+3C_{\F_5})=1$. If the number matches the number of such $x$ equals the number of effective rational degree-3 cuspidal divisors, then no rational non-cuspidal degree-3 divisors can exist in $X_0(N)^{(3)}(\Q)$.


    The modular curve $X_0(87)_\Q$ has $4$ cusps, all of which are defined over $\Q$. We compute the $l(A_x+3C_{\F_5})$ for all $A_x\in J_C(87)(\F_5)$, and find exactly $20$
    divisors $x\in J_C(87)(\F_5)$ such that $l(A_x+3C_{\F_5})=1$. This matches the number of rational effective degree 3 cuspidal divisors:
    $$\binom{\#\text{cusps}+3-1}{3}=\binom{6}{3}=20.$$
    Hence all effective degree-3 rational divisors in $X_0(87)^{(3)}(\Q)$ are cuspidal. The code verifying this can be found in \githubbare{87.m}.

    We do a similar search for $X_0(72)$ and find $152$ divisors $A_x\in J_C(72)(\F_5)$ such that $l(A_x+3C_{\F_5})=1$. The curve $X_0(72)_\Q$ has $8$ rational cusps and $4$ degree $2$ cusps, giving 
    $$\binom{8+3-1}{3}+4\cdot 8=152$$
    cuspidal divisors. Thus, every effective degree-3 divisor in $X_0(72)^{(3)}(\F_5)$, and hence in $X_0(72)^{(3)}(\Q)$ is cuspidal. The code verifying this can be found in \githubbare{72.m}.

    Finally, since $X_0(72)$ has no degree $\leq 3$ non-cuspidal points, neither does $X_0(144).$
\end{proof}

\subsection{Degree $4$ and $5$ points on $X_0(144)$}\label{sec:144}

Proving that $X_0(144)$ has no sporadic points turns out to be by far the hardest.

\begin{proposition}\label{144tor}
The group $J_0(144)(\Q)$ is generated by divisors supported on the cusps and is isomorphic to $\torz{4}^3 \times \torz{24}^3.$
\end{proposition}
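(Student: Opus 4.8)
The plan is to split the statement into two parts: (i) computing the structure of the rational torsion subgroup $J_0(144)(\Q)_{\tor}$, and (ii) showing that the cuspidal subgroup $J_C(144)(\Q)$ already exhausts all of $J_0(144)(\Q)$ (which in particular forces $J_0(144)(\Q)$ to be finite and equal to its torsion). Since $144 = 16\cdot 9$, the curve $X_0(144)$ is highly composite in level and the Jacobian decomposes up to isogeny into many small factors; one first checks, e.g.\ via the newform decomposition or via \cite{DHJO25}-style arguments, that every simple isogeny factor of $J_0(144)$ has analytic rank $0$, so that $J_0(144)(\Q)$ is finite by Kolyvagin--Logachev. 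Thus it suffices to pin down the torsion.

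For the upper bound on $J_0(144)(\Q)_{\tor}$, the plan is the standard reduction argument: for a prime $p$ of good reduction (here $p = 5, 7, 11, 13, 17, \ldots$, all primes not dividing $6$), reduction modulo $p$ is injective on $J_0(144)(\Q)_{\tor}$, so $J_0(144)(\Q)_{\tor}$ embeds into $\gcd$-type information coming from $\#J_0(144)(\F_p)$ for several such $p$. Computing $\#J_0(144)(\F_p)$ via the Eichler--Shimura relation (the characteristic polynomial of $\Frob_p$ acting on $H^1$, obtainable from the Hecke action on $\mathbb{S}_2(\Gamma_0(144))$) for a handful of primes will cut the possible group structure down to at most $\torz{4}^3\times\torz{24}^3$. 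For the lower bound, I would explicitly exhibit this group inside the cuspidal subgroup: $X_0(144)$ has $\sigma_0$-many cusps with known fields of definition and known degrees, and the subgroup of $J_0(144)$ generated by differences of cusps can be computed exactly using the fact that the cuspidal divisor class group of $X_0(N)$ is finite (Manin--Drinfeld) together with explicit formulas for the orders of cuspidal divisor classes (e.g.\ via the theory of modular units / $\ddiv$ of Siegel units, as in work of Ligozat and Yoo, or by a direct computation with modular symbols as in \cite{stein07}). Matching the lower bound from the cuspidal subgroup with the upper bound from reduction mod $p$ then yields both that $J_0(144)(\Q)_{\tor}\simeq \torz{4}^3\times\torz{24}^3$ and that it is entirely cuspidal.

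Concretely, the steps in order are: (1) verify $\rk J_0(144)(\Q) = 0$ via the newform decomposition and known analytic ranks; (2) compute the Hecke characteristic polynomials at several good primes $p$ to get $\#J_0(144)(\F_p)$, and intersect to bound $J_0(144)(\Q)_{\tor}$ from above; (3) enumerate the cusps of $X_0(144)$ with their degrees and Galois action, and compute the subgroup $J_C(144)(\Q)$ they generate in $J_0(144)$, via modular symbols or modular-unit valuations; (4) observe that the order and structure of $J_C(144)(\Q)$ already meets the upper bound from step (2), so all three quantities coincide.

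The main obstacle I expect is step (3): although the cuspidal subgroup of $X_0(N)$ is in principle computable, $N = 144$ is large and divisible by a high power of $2$, so there are many cusps, many of which are defined over proper subfields of cyclotomic fields, and the relations among cuspidal divisor classes (the full structure, not just the group of $\Q$-rational classes) require care — one has to correctly identify which linear combinations of cusps are principal, which is exactly the kind of modular-units bookkeeping that is error-prone at this level. A clean way to sidestep part of this is to compute $J_C(144)(\Q)$ directly as the image of the rational cuspidal divisor group under the modular-symbols presentation of $H_1(X_0(144),\Z)$, and then only use reduction mod $p$ to confirm it is everything; this is presumably the route taken, and is the one I would follow, with the bulk of the effort going into getting the cusp data and the modular-symbol computation exactly right.
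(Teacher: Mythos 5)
Your overall architecture --- exhibit the cuspidal subgroup as a lower bound, then use injectivity of reduction modulo good primes to obtain a matching upper bound --- is the natural first attack, and the paper does carry out your steps (1) and (3) in essentially the way you describe: rank $0$ comes from \cite[Theorem 3.1]{Deg3Class}, and $J_C(144)(\Q)\simeq\torz{4}^3\times\torz{24}^3$ is computed by the methods of \cite[Section 3]{Deg3Class}. The genuine gap is in your steps (2) and (4): you assert that computing $\#J_0(144)(\F_p)$ (or the group structures $J_0(144)(\F_p)$) for a handful of good primes ``will cut the possible group structure down to at most $\torz{4}^3\times\torz{24}^3$.'' The authors state explicitly that they tried exactly this direct route (the methods of \cite{Deg3Class} and \cite{Lupoian2024}) and that it fails for $N=144$. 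Since $144=2^4\cdot 3^2$, the only usable primes are $p\ge 5$, and the upper bound extracted from the reductions at these primes does not descend to the cuspidal group; the matching your plan hinges on simply does not occur, so the argument cannot be completed as written. (Your stated worry that the hard part is computing the cuspidal subgroup is misplaced --- that part works; it is the upper bound that breaks.)

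The paper's actual proof requires an extra idea: pass to the degree-$2$ \'etale cover $f\colon X_\Delta(144)\to X_0(144)$ inside the Shimura covering, where $\Delta=\langle -1,11\rangle\subseteq(\torz{144})^\times$, so that $\ker\bigl(f^*\colon J_0(144)\to J_\Delta(144)\bigr)\cong\torz{2}$ (Cartier duality, as in \cite[Prop.~II.11.6]{mazur77}). On $X_\Delta(144)$ the direct torsion computation \emph{does} succeed, giving $J_\Delta(144)(\Q)_{\tor}\cong\torz{2}^2\times\torz{6}\times\torz{24}^4\times\torz{48}$, and since $f^*(J_0(144)(\Q))$ must land in the invariants under the diamond operator $\langle 5\rangle$, which form a group isomorphic to $\torz{2}\times\torz{4}^2\times\torz{24}^3$, one gets $\#J_0(144)(\Q)_{\tor}\le 2\cdot 2\cdot 4^2\cdot 24^3=\#\bigl(\torz{4}^3\times\torz{24}^3\bigr)$; the cuspidal lower bound then forces equality. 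If you want to rescue your proposal you need some auxiliary input of this kind --- the reduction-mod-$p$ sieve alone is not enough at this level.
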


\begin{proof}
The Mordell-Weil rank of $J_0(144)(\Q)$ is 0 by \cite[Theorem 3.1]{Deg3Class}, so $J_0(144)(\Q)=J_0(144)(\Q)_{\tor}$ and hence we are reduced to computing the torsion subgroup of $J_0(144)$.
As before, $J_C(144)$ denotes the cuspidal subgroup of $J_0(144)$, i.e., the subgroup generated by divisors supported on cusps.  We compute $J_C(144)(\Q)\simeq \torz{4}^3 \times \torz{24}^3$ using the methods of \cite[Section 3]{Deg3Class}, so it suffices to show $J_C(144)(\Q)=J_0(144)(\Q)_{tors}$. 

We tried to apply the methods of \cite[Section 3]{Deg3Class} and \cite{Lupoian2024} in order to show $J_C(144)(\Q)=J_0(144)(\Q)_{tors}$ directly but all of these failed. Our strategy will be to find a cover $X \to X_0(144)$ so that we can compute $J(X)(\Q)_{tors}$ and use this cover to compute $J_0(144)(\Q)_{tors}$.

To this end let $\Delta=\langle -1, 11 \rangle \subseteq (\torz{144})^\times$. Then $f := X_\Delta(144) \to X_0(144)$ is a map of degree $2$. The diamond operator $\iota := \langle 5\rangle$ is the involution of $X_\Delta(144)$ such that $X_0(144) = X_\Delta/\iota$. The genus of $X_\Delta(144)$ is $25$, so, by the Riemann-Hurwitz formula, the map $f$ is unramified and hence \'etale. In particular, $X_\Delta$ is a subcover of the Shimura cover $X_2(N) \to X_0(N)$ from \cite[Eq. II.11.5]{mazur77}. Let $f^*:J_0(144)\rightarrow J_\Delta(144)$, similarly to \cite[Prop. II.11.6]{mazur77} we get that $\ker f^*$ is isomorphic to the Cartier dual of $\torz{N}^\times/\Delta$, i.e. we have the following isomorphisms of group schemes over $\Q$: $$\ker f^* \cong \Hom(\torz{N}^\times/\Delta, \mathbb G_m) \cong \Hom(\torz{2}, \mathbb G_m) \cong \torz{2}.$$  

Using the methods of \cite[Section 3]{Deg3Class} we compute that $J_\Delta(144)(\Q)_{tors}$ is generated by cuspidal divisors and that $$J_\Delta(144)(\Q)_{tors} \cong \torz{2}^2 \times \torz{6} \times \torz{24}^4 \times \torz{48}.$$
Now $f^*(J_0(144)(\Q))$ is invariant under the action of $\diamondop{5}$ so most certainly $f^*(J_0(144)(\Q)) \subseteq J_\Delta(144)(\Q)^{\diamondop{5}}.$
We compute that $J_\Delta(144)(\Q)^{\diamondop{5}} \cong \torz{2} \times \torz{4}^2 \times \torz{24}^3$. From this we get that $J_0(144)(\Q)_{tors}$ fits in the exact sequence
$$ \torz{2} \to J_0(144)(\Q)_{tors} \to J_\Delta(144)(\Q)^{\diamondop{5}} \cong \torz{2} \times \torz{4}^2 \times \torz{24}^3.$$

However, since we already know that  $J_0(144)(\Q)_{tors}$ contains a subgroup isomorphic to $\torz{4}^3 \times \torz{24}^3$, this implies that $J_0(144)(\Q)_{tors}$ has to actually be isomorphic to $\torz{4}^3 \times \torz{24}^3$ for cardinality reasons.

\begin{remark}
    While trying to prove the previous proposition, we noticed a curious phenomenon. The natural map $f:X_0(144)\rightarrow X_0(9)$ is of degree $12$. Taking $c\in X_0(9)(\Q)$, we get that $f^*(c)$ is a theta characteristic, i.e., $2f^*(c)$ is in the canonical class. The chance for this happening at random is $\frac{\#J_0(144)(\Q)_\tor}{\#J_0(144)(\Q)[2]}=\frac{1}{1728}$. We note that the choice of $c\in X_0(9)(\Q)$ as $X_0(9)\simeq \PP^1$, so $f^*(c)$ will be linearly equivalent to $f^*(d)$ for some other $d\in X_0(9)(\Q)$.
\end{remark}

\end{proof}

\begin{proposition}\label{prop:144}
    The modular curve $X_0(144)$ has no non-cuspidal points of degree $4$ or $5$.
\end{proposition}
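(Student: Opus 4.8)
The plan is to run a Mordell--Weil sieve argument for degrees $4$ and $5$ on $X_0(144)$, leveraging that $J_0(144)(\Q)$ is the finite (cuspidal) group computed in \Cref{144tor}. First I would fix a rational cusp $C \in X_0(144)(\Q)$ and consider, for each $d \in \{4,5\}$, the Abel--Jacobi map $\iota_d \colon X_0(144)^{(d)}(\Q) \to J_0(144)(\Q)$, $D \mapsto [D - dC]$. Since $J_0(144)(\Q)$ is finite and known explicitly, the image of $\iota_d$ lies in a completely understood finite group, so the task reduces to: for each class $x \in J_0(144)(\Q)$, decide whether there is an effective rational divisor $D$ of degree $d$ with $[D-dC]=x$, and if so, whether $D$ can be non-cuspidal. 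Using Riemann--Roch, the fibre of $\iota_d$ over $x$ is the (possibly empty) linear system $\mathbb{P}(L(A_x + dC))$ for a chosen representative $A_x$, so one needs to understand $\ell(A_x + dC)$ for all $x$.

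The key step is to cut down the search using reduction modulo good primes, exactly as in the degree-$3$ propositions above but now pushing through the extra difficulty of positive-dimensional linear systems. Concretely, I would pick two or three small primes $p$ of good reduction (say $p = 5, 7, 11$, noting $144 = 2^4 \cdot 3^2$), and use that the reduction map $J_0(144)(\Q) \to J_0(144)(\F_p)$ is injective on torsion (hence on all of $J_0(144)(\Q)$ since it is torsion). This gives an injection $X_0(144)^{(d)}(\Q) \hookrightarrow J_0(144)(\F_p)$ via $D \mapsto [(D-dC)_{\F_p}]$ once $d$ is below the $\F_p$-gonality, or more carefully, one combines the constraints from several primes: a rational class $x$ survives only if its reductions mod each $p$ lie in the image of $X_0(144)^{(d)}(\F_p) \to J_0(144)(\F_p)$. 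For each surviving $x$, I would compute $\ell(A_x + dC)$ over $\Q$ (or over a number field realising the actual divisor), and when this is positive, explicitly enumerate the effective divisors in the linear system and check whether any consists of non-cuspidal points. The cuspidal divisors of degree $d$ form a known finite list (from the explicit cusp structure of $X_0(144)$, which has $8$ rational and $4$ quadratic cusps, inherited via $X_0(144) \to X_0(72)$), so one matches counts: if every $x$ with $\ell(A_x+dC) > 0$ that is hit by an effective rational divisor is accounted for by cuspidal divisors, we are done.

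I would also exploit the covering $f \colon X_0(144) \to X_0(72)$ of degree $2$ and the map to $X_0(9) \cong \PP^1$ of degree $12$: any degree-$4$ point on $X_0(144)$ pushes to a divisor on $X_0(72)$, and since $X_0(72)$ has no non-cuspidal points of degree $\leq 3$, the pushforward must have a controlled shape (e.g., it cannot be a single degree-$4$ non-cuspidal point mapping to a degree-$2$ point of $X_0(72)$ if that degree-$2$ point would then have to be cuspidal or CM-related). Similarly the automorphisms (Atkin--Lehner operators $w_9, w_{16}, w_{144}$ and their combinations, described via \Cref{sec:genus_of_quotients}) act on the set of low-degree points and on $J_0(144)(\Q)$, which cuts the number of classes $x$ to be checked by the size of the orbit. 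The degree-$5$ case is handled the same way but is lighter because $5$ is odd, so there is no subtle interaction with the degree-$2$ map $f$, and one can work purely with the Abel--Jacobi/Riemann--Roch computation modulo a single prime of good reduction of gonality large enough.

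The main obstacle I anticipate is the Riemann--Roch bookkeeping in degrees $4$ and $5$ on a genus-$25$ curve: unlike degree $3$, where $\ell(A_x + 3C) \leq 1$ generically and the argument is a clean count, in degrees $4$ and $5$ there will be classes $x$ with $\ell(A_x + dC) = 2$ or more (indeed $f^*$ of a point on $X_0(9) \cong \PP^1$ already gives a $g^1_{12}$, and its sub-pencils will interact with low-degree effective divisors), so one cannot simply count classes --- one has to actually compute these linear systems and enumerate their members to rule out non-cuspidal effective divisors. Making this computation tractable requires a good explicit model of $X_0(144)$ (or working with modular symbols / the function field directly), and this is presumably why, as the introduction notes, more than half the effort on the paper went into \Cref{144tor} and this proposition. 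I would therefore expect the proof to combine the sieve skeleton above with a substantial, carefully optimised \texttt{magma} computation, cross-checked at several primes, with the code deposited in the repository.
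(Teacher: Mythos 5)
Your sieve skeleton is the same as the paper's: fix a rational cusp, map $X_0(144)^{(d)}(\Q)$ into the finite group $J_0(144)(\Q)$ computed in \Cref{144tor}, reduce modulo $5$, and match the count of surviving classes in $X_0(144)^{(d)}(\F_5)$ against the known list of cuspidal degree-$d$ divisors. The one fact you fail to pin down --- and it is precisely what turns the argument into a clean count rather than the elaborate enumeration you brace for --- is that $\gon_\Q X_0(144)=\gon_{\F_5}X_0(144)=6>5$ (this is \cite[Theorem 1.4 and Proposition 5.15]{NajmanOrlić}, and the $\Q$-gonality is already quoted earlier in the paper). This forces $\ell(D)=1$ for every effective divisor $D$ of degree $d\le 5$ over both $\Q$ and $\F_5$, so the Abel--Jacobi maps on $X_0(144)^{(d)}$ are injective over both fields, no class carries a positive-dimensional linear system in these degrees, and the number of points of $X_0(144)^{(d)}(\F_5)$ landing in $\red_{5,J}(J_0(144)(\Q))$ is an exact upper bound for the number of rational effective degree-$d$ divisors. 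Your anticipated ``main obstacle'' --- classes with $\ell(A_x+dC)\ge 2$ coming from sub-pencils of the $g^1_{12}$ pulled back from $X_0(9)$ --- does not occur: a sub-pencil of a $g^1_{12}$ is still a pencil of degree-$12$ divisors, and any $g^1_d$ with $d\le 5$ would contradict the gonality being $6$. You do write ``once $d$ is below the $\F_p$-gonality,'' so the hypothesis is on your radar; you just never check it, and instead plan a fallback (computing and enumerating the linear systems $|A_x+dC|$ over $\Q$ on a genus-$25$ curve, pushing forward to $X_0(72)$, exploiting the Atkin--Lehner action, combining several primes) that would work in principle but is entirely unnecessary. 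With the gonality bound in hand, your plan collapses to the paper's one-prime, one-diagram proof.
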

\begin{proof}

    We will show that any $D\in X_0(144)^{(d)}(\Q)$ is cuspidal for $d=4,5$. We have the following commutative diagram
\begin{center}
    \begin{tikzcd}
X_0(144)^{(d)}(\Q) \arrow[r,hook, "A"'] \arrow[d,hook, "\red_5"] &J_0(144)(\Q)   \arrow[d,hook,"\red_{5,J}"] \\
X_0(144)^{(d)}(\F_5) \arrow[r,hook, "A_5"']               & J_0(144)(\F_5)
\end{tikzcd},
\end{center}
where $A$ and $A_5$ are the Abel-Jacobi maps (with some fixed base point), and the vertical maps are reductions modulo $5$. The maps $A$ and $A_5$ are injective because $\gon_{\F_5}X_0(144)=\gon_{\Q}X_0(144)=6>d$ by \cite[Theorem 1.4 and Proposition 5.15]{NajmanOrlić}, while $\red_{5,J}$ is injective by the injectivity of the torsion and $\rk J_0(144)(\Q)=0$. The injectivity of $\red_5$ follows by the commutativity of the diagram.


Any $D\in X_0(144)^{(d)}(\Q)$ has to satisfy
$$A_5 \circ \red_{5}(D)\in \red_{5,J}(J_0(144)(\Q)).$$ We run through all $x\in X_{0}(144)^{(d)}(\F_5)$ and count the number of those satisfying $A_5(x)\in \red_{5,J}(J_0(144)(\Q))$. We obtain that their number is exactly equal to the number of degree $d$ cuspidal divisors in $X_0(144)^{(d)}(\Q)$. The \texttt{magma} computations that were used to verify this can be found in \githubbare{Prop_8_3.m}. From the injectivity of the maps in the diagram, it follows that $D$ is cuspidal. 
\end{proof}

\appendix
\section{Castelnuovo-Severi inequality with at least 3 maps}

When considering how to bound the gonalities of the quotients of $X_0(N)$ by all involutions, we proved the following result, which can be used to bound the gonality of a curve $X$ using the quotient by an involution that does not lie in the center of $\Aut X.$ We did not need this result in the end, but we believe it might be of use to others, and of independent interest. 

\begin{proposition}\label{prop:CSstrong}
    Let $X$ be a curve of genus $g$ and $\iota$ an automorphism of $X$ which is not in the center of $\Aut X$. If \begin{equation} \label{eq:CS}
        g(X)>|\iota|\cdot g(X/\iota) + (|\iota|-1)(d-1).
    \end{equation} Then a map of degree $d$ to $X\rightarrow \PP^1$ factors through $X/\diamondop{g \cdot \iota \cdot g^{-1}|g \in \Aut X}$.
\end{proposition}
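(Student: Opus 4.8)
The plan is to apply the Castelnuovo--Severi inequality once for each conjugate of $\iota$ and then assemble the resulting factorizations. Fix a degree-$d$ morphism $\phi\colon X\to\PP^1$ and write $G:=\diamondop{g\iota g^{-1}:g\in\Aut X}$ for the subgroup generated by all conjugates of $\iota$ (equivalently, the normal closure of $\diamondop{\iota}$). A morphism $X\to\PP^1$ factors through the quotient $X\to X/G$ exactly when it is $G$-invariant, and $G$ is generated by the elements $g\iota g^{-1}$; so it suffices to prove that $\phi\circ(g\iota g^{-1})=\phi$ for every $g\in\Aut X$.

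First I would fix $g\in\Aut X$ and compare the two morphisms $\phi\colon X\to\PP^1$, of degree $d$, and the quotient map $\pi_g\colon X\to X/\diamondop{g\iota g^{-1}}$, of degree $|\iota|$. Since $g$ descends to an isomorphism $X/\iota\xrightarrow{\ \sim\ }X/\diamondop{g\iota g^{-1}}$, we have $g(X/\diamondop{g\iota g^{-1}})=g(X/\iota)$. Applying the Castelnuovo--Severi inequality to $\phi$ and $\pi_g$ yields one of two outcomes: either
\[
g(X)\le d\cdot g(\PP^1)+|\iota|\cdot g(X/\iota)+(d-1)(|\iota|-1)=|\iota|\cdot g(X/\iota)+(|\iota|-1)(d-1),
\]
which contradicts the hypothesis \eqref{eq:CS}, or $\phi$ and $\pi_g$ both factor through a common nonconstant morphism $\mu_g\colon X\to Z_g$ of degree $>1$. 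In the latter case $Z_g$ is an intermediate cover of the Galois cover $\pi_g$, whose group $\diamondop{g\iota g^{-1}}$ is cyclic of prime order (as $\iota$ is an involution), so the only intermediate cover of degree $>1$ is $\pi_g$ itself. Hence $\phi$ factors through $\pi_g$, i.e.\ $\phi\circ(g\iota g^{-1})=\phi$. Letting $g$ range over $\Aut X$ shows $\phi$ is $G$-invariant and therefore factors through $X\to X/G$. (Non-centrality of $\iota$ is not actually used in the deduction; it only guarantees that $G$ is strictly larger than $\diamondop{\iota}$, so that the conclusion genuinely improves on ``$\phi$ factors through $X/\iota$''.)

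I expect the last step — pinning down that the common factor produced by Castelnuovo--Severi is exactly $X/\iota$ — to be the only subtle point. For an involution this is immediate from primality of $|\diamondop{\iota}|$, and this is the case relevant to bounding $\gon_\C X_0(N)$ through a noncentral involution. For an automorphism of composite order one would instead only obtain directly that $\phi$ is invariant under some nontrivial power $g\iota^{a}g^{-1}$, and one would need to feed this back into the argument — either by reapplying Castelnuovo--Severi to $\phi$ and the relevant subcover $X/\diamondop{\iota^{a}}$, or by inducting over the subgroup lattice of $\diamondop{\iota}$ — in order to recover invariance under $\iota$ itself; I would either carry out this iteration or simply state the proposition for involutions, since that is all that is needed.
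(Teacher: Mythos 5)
Your proof is correct and follows essentially the same route as the paper: apply Castelnuovo--Severi to $\phi$ and each quotient map $X\to X/\diamondop{g\iota g^{-1}}$ (all of whose targets have the same genus as $X/\iota$), conclude that $\phi$ factors through each one, and hence through the quotient by the normal closure of $\diamondop{\iota}$. Your closing caveat is well taken: for $|\iota|$ composite the Castelnuovo--Severi dichotomy only forces $\phi$ to factor through $X/H$ for some nontrivial subgroup $H\leq\diamondop{\iota}$ (it is the primality of $|\iota|$ that pins down $H=\diamondop{\iota}$), a point the paper's own one-line appeal to Castelnuovo--Severi passes over silently, and which is harmless here only because the intended application is to involutions.
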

\begin{proof}
    First note that if \eqref{eq:CS} is satisfied, then a map of degree $d$ to $f:X\rightarrow \PP^1$ factors through $X/\iota$ by the Castelnouovo-Severi inequality (see \cite[Theorem 3.11.3]{Stichtenoth09}). For an automorphism $g\in \Aut X$, the curves $X/\iota $ and $X/g \cdot \iota \cdot g^{-1}$ are isomorphic, so $X/g \cdot \iota \cdot g^{-1}$ also satisfies \eqref{eq:CS}. Hence $f$ also factors through $X/g \cdot \iota \cdot g^{-1}$. The proposition now follows. 
\end{proof}

\bibliographystyle{amsalpha}
\bibliography{bibliography1}
\end{document}

